\documentclass[12pt]{article}
	\usepackage{amsmath,fullpage,amsthm,amssymb,xcolor,graphicx}
	\newtheorem{prop}{Proposition}[section]
	\newtheorem{theorem}{Theorem}[section]
	\newtheorem{corollary}{Corollary}[section]
	\newtheorem{lemma}{Lemma}[section]
	\newtheorem{definition}{Definition}[section]

	\DeclareMathOperator{\spec}{spec}

	\title{ Improved bound of graph energy in terms of vertex cover number}
\author{Aniruddha Samanta \thanks{Theoretical Statistics and Mathematics Unit, Indian Statistical Institute, Kolkata-700108, India. Email: aniruddha.sam@gmail.com}}

\date{\today}
\begin{document}
\maketitle
\baselineskip=0.25in

%%%%%%%%%%%%%%%%%%%%%%%%%%%%%%%%%%%%%%%%%%%%%%%%
\begin{abstract}
Let $ G $ be a simple graph with the vertex cover number $ \tau $. The energy $ \mathcal{E}(G) $ of $ G $ is the sum of the absolute values of all the adjacency eigenvalues of $ G $. In this article, we establish $ \mathcal{E}(G)\geq 2\tau $ for several classes of graphs. The result significantly improves the known result $ \mathcal{E}(G)\geq 2\tau-2c$ for many classes of graphs, where $ c $ is the number of odd cycles.  
\end{abstract}

%%%%%%%%%%%%%%%%%%%%%%%%%%%%%%%%%%%%%%%%%%%%%%%%

{\bf Mathematics Subject Classification(2010):} 05C22(primary); 05C50, 05C35(secondary).

\textbf{Keywords.} Adjacency matrix, Graph energy, Vertex cover number.

%%%%%%%%%%%%%%%%%%%%%%%%%%%%%%%%%%%%%%%%%%%%%%%%
\section{Introduction}

Throughout this article, we consider $ G $ to be a simple undirected graph with vertex set $ V(G)=\{ v_1, v_2, \dots, v_n\} $ and edge set $ E(G) $. If two vertices $ v_i $ and $ v_j $ are connected by an edge, then we write $ v_i \sim v_j $ and the edge between them is denoted by $ e_{ij} $. The adjacency matrix $ A(G)=(a_{ij})_{n\times n}$ of $ G $ is an $n\times n $ symmetric matrix, defined as $ a_{ij}=1$ if $ v_i \sim v_j $ and zero otherwise. Let $ \lambda_1, \lambda_2, \dots, \lambda_n$ be the eigenvalues of $ A(G) $. Then the energy of $ G $ is defined as $ \mathcal{E}(G):=\sum\limits_{i=1}^{n}|\lambda_i| $, where $ |\lambda_i| $ is the absolute value of $ \lambda_i $. This graph invariant was formally introduced by Gutman in 1978. It has a great significance in connection with the total $ \pi $ -electron energy in conjugated hydrocarbon in chemistry. Since then, graph energy has been studied extensively by many researchers.

%It is to be observed that some special graphs such as simple graph, signed graph, oriented graph and mixed graph etc. can be obtained from a T-gain graph Φ = (G, φ) by taking some specific values of φ. Therefore the energy of a T-gain graph is also worth studying. One can immediately compute energy of a T-gain graph by computing eigenvalues of its associated adjacency matrix. However, it is rather hard to compute eigenvalues for a large Hermitian matrix. 
%
%Therefore many researchers established several upper and lower bounds for graph energy, see the book [15]. Some publications focused on bounding energy in terms of algebraic and combinatorial parameters such as matching number, maximum vertex degree, minimum vertex degree and maximum edge degree etc.

Studies of graph eigenvalues have a long history in the mathematics literature. Many beautiful results and bounds have been discovered on the largest and smallest eigenvalues of a graph. However, handling other eigenvalues is difficult, and that results in a very few literature on such eigenvalues. Since, the energy of a graph $ G $ is dependent on all eigenvalues of  $ G $, so it is quite hard to analyses its properties. Therefore, researchers mainly focused on bounding energy in terms of algebraic and combinatorial parameters of a graphs such as matching number, vertex degree, number of vertices, number of edges, vertex cover number, etc.

A vertex cover $ X $ of a graph $ G $ is a subset of $ V(G) $ such that any edge of $ G $ is adjacent to at least one vertex of $ X $. The vertex cover number of $ G $, denoted by $ \tau(G) $, is the cardinality of a vertex cover of $ G $ with minimum number of vertices. A matching of a graph $ G $ is a set of independent edges, that is any two edges have no common vertices. The matching number of $ G $ is the cardinality of a matching with maximum number of edges, and it is denoted by $ \mu(G) $. For a graph $ G $, it is well known that $ \tau(G)\geq \mu(G) $. \\[-2mm]

 Wang and Ma \cite{Wang-Ma}, established the following lower bound of $ \mathcal{E}(G) $ in terms of vertex cover number $ \tau(G)$ and the  number of odd cycles $ c(G) $. 
  
\begin{equation}\label{eq12}
	 \mathcal{E}(G)\geq 2\tau(G)-2c(G).
\end{equation}

% \begin{theorem}\cite[Theorem 4.2]{Wang-Ma}
% 	Let $ G $ be a graph with vertex cover number $ \tau$ and number of odd cycles $ c$. Then $ \mathcal{E}(G)\geq 2\tau-2c$ . Equality occurs if and only if $ G $ is the disjoint union of some $ K_{p,p} $, for some $ p $ and isolated vertices.
% \end{theorem}

For any graph $ G $ with matching number $ \mu(G) $, Wong et.al \cite{Wong-Wang-Chu} proved that 

\begin{equation}\label{eq22}
	\mathcal{E}(G)\geq 2\mu(G).
\end{equation}

Later, many authors extended results \eqref{eq12} and \eqref{eq22} for mixed graphs, digraphs, complex unit gain graphs, etc., see references \cite{Energybound1}.

In this article, we present several classes of graphs $G$ for which $\mathcal{E}(G)\geq 2\tau(G)$, which significantly improves the above bounds \eqref{eq12} and \eqref{eq22} for such classes.

%%%%%%%%%%%%%%%%%%%%%%%%%%%%%%%%%%%%%%%%%%%%%%%%	
\section{Definitions, notation and preliminary results}\label{prelim}

Let $ G $ be an undirected simple graph with vertex set $ V(G)=\{ v_1, v_2, \dots, v_n\} $ and edge set $ E(G) $. A subset $ S $ of the edge set $ E(G) $ is called a \textit{cut set} of $ G $ if the deletion of all edges of $ S $ from $ G $ increase the number of connected components of $ G $. If $ S $ is a cut set of $ G $ then $ G-S $ denotes the resulting graph after deletion of all edges of $ S $ from $ G $ and it is defined as $ G-S:=G_1\oplus G_2 \oplus \cdots\oplus G_r$, where $ G_i's  $ are the connected components in $ G-S $. For a vertex $ v $ in $ G $, we denote $ G-v $ as an induced subgraph of $ G $ with vertex set $ V(G)\setminus{v} $. A vertex $ v \in V(G) $ is said to be a \textit{cut vertex} of $ G $ if $ G-v $ increases the number of connected components. A \textit{block} of the graph $ G $ is a maximal connected subgraph of $ G $ that has no cut-vertex. For an edge $ e \in E(G) $, we denote $ G-[e] $ as an induce subgraph of $ G $ obtained by removing $ e $, edges adjacent with $ e $ and the vertices joining $ e $. We denote a complete graph and a cycle with $ n $ vertices as $ K_n $ and $ C_n $, respectively. A complete bipartite graph with vertex partition size $p$ and $ q $ is denoted by $ K_{p,q} $. Let us first present the following bound which we are going to improve in this article for several classes of graphs.

%Let $ G $ be a connected graph with vertex set $ V(G)=\{ v_1, v_2, \dots, v_n\} $. For a vertex $ v_i $, we denote $ G-v_i $ as a subgraph of $ G $ after removing the vertex $ v_i $ and its adjacent edges from $ G $. A vertex $ v_i \in V(G) $ is called a \textit{cut-vertex} of $ G $ if $ G-v_i $ is disconnected graph. A \textit{block} of the graph $ G $ is a maximal connected subgraph of $ G $ that has no cut-vertex

\begin{theorem}\cite[Theorem 4.2]{Wang-Ma}
	Let $ G $ be a graph with vertex cover number $ \tau$ and number of odd cycles $ c$. Then $ \mathcal{E}(G)\geq 2\tau-2c$. Equality occurs if and only if $ G $ is the disjoint union of some $ K_{p,p} $, for some $ p $ and isolated vertices.
\end{theorem}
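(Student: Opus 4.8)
The plan is to deduce the theorem from the matching bound \eqref{eq22} together with a purely combinatorial inequality relating $\tau$, $\mu$ and $c$. Precisely, I would first prove the following \emph{Lemma}: for every graph $G$, $\tau(G)\le \mu(G)+c(G)$, where $c(G)$ denotes the number of odd cycles of $G$. Granting this lemma the main inequality is immediate: by \eqref{eq22},
\[
\mathcal{E}(G)\ \ge\ 2\mu(G)\ \ge\ 2\bigl(\tau(G)-c(G)\bigr)\ =\ 2\tau(G)-2c(G).
\]

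To prove the Lemma I would induct on the number of edges $|E(G)|$. If $G$ is bipartite then $c(G)=0$ and K\"onig's theorem gives $\tau(G)=\mu(G)$, so the claim holds (this also settles the edgeless base case). If $G$ is not bipartite, fix an odd cycle $Z$ of $G$ and an edge $e=uv$ of $Z$. Three elementary facts then combine: (i) $\tau(G)\le\tau(G-e)+1$, since adjoining $u$ to a minimum vertex cover of $G-e$ covers all of $G$; (ii) $\mu(G-e)\le\mu(G)$, since deleting an edge cannot increase the matching number; and (iii) $c(G-e)\le c(G)-1$, because deleting $e$ destroys every odd cycle through $e$ — of which $Z$ is one — and creates no new cycles. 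Applying the induction hypothesis to $G-e$ (which has fewer edges) and chaining these gives $\tau(G)\le \tau(G-e)+1\le \mu(G-e)+c(G-e)+1\le \mu(G)+c(G)$, as required.

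For the equality statement, note that $\mathcal{E}$, $\tau$, $\mu$ and $c$ are all additive over connected components, so it suffices to understand the connected extremal graphs. The ``if'' direction is a direct computation: if $G$ is a disjoint union of graphs $K_{p,p}$ and isolated vertices then $G$ is bipartite, so $c(G)=0$, K\"onig gives $\tau(G)=\sum_i p_i=\mu(G)$, and the spectrum of each $K_{p,p}$ is $\{p,-p,0^{2p-2}\}$, whence $\mathcal{E}(G)=\sum_i 2p_i=2\tau(G)=2\tau(G)-2c(G)$. For the ``only if'' direction, suppose $\mathcal{E}(G)=2\tau(G)-2c(G)$. Then every inequality in the chain $\mathcal{E}(G)\ge 2\mu(G)\ge 2\tau(G)-2c(G)$ is an equality; in particular $\mathcal{E}(G)=2\mu(G)$. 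I would then invoke the characterization of the equality case of \eqref{eq22} from \cite{Wong-Wang-Chu} (or reprove it): $\mathcal{E}(G)=2\mu(G)$ forces every component of $G$ to be an isolated vertex or a balanced complete bipartite graph $K_{p,p}$.

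The main obstacle will be the equality analysis rather than the inequality. The inequality reduces, as above, to a one-line induction once the Lemma is in hand; but pinning down exactly when $\mathcal{E}(G)=2\mu(G)$ — and verifying that the extra constraint forced by $\mu(G)=\tau(G)-c(G)$ does not further shrink the extremal class — requires the finer structural result on the equality case of the matching--energy bound. If that characterization is not quoted directly, one can recover it by a Coulson-integral / quasi-order comparison of $G$ with suitable edge-subgraphs, together with the observation that among complete bipartite graphs $K_{a,b}$ only $K_{p,p}$ satisfies $\sqrt{ab}=a$.
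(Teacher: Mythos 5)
This statement is quoted from Wang--Ma \cite{Wang-Ma} and the paper supplies no proof of it, so there is nothing internal to compare against; judged on its own terms, your argument is correct. The combinatorial lemma $\tau(G)\le\mu(G)+c(G)$ is right, and your induction is clean: deleting an edge $e$ of an odd cycle can only delete cycles (never create them), so $c(G-e)\le c(G)-1$, while $\tau$ drops by at most one and $\mu$ does not increase; together with K\"onig's theorem in the bipartite base case this closes the induction, and the inequality then follows from \eqref{eq22}. Your equality analysis is also sound: equality forces $\mathcal{E}(G)=2\mu(G)$, and since $\mathcal{E}(K_{a,b})=2\sqrt{ab}$ while $\mu(K_{a,b})=\min(a,b)$, the extremal components of the matching bound are exactly the $K_{p,p}$'s and isolated vertices, which are bipartite and hence automatically satisfy $\mu=\tau-c$; so the extra constraint shrinks nothing. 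Two caveats worth recording. First, your derivation is anachronistic relative to the source: Wang--Ma (2017) could not have argued this way, since \eqref{eq22} is the later and stronger result of Wong--Wang--Chu (2018); their original proof is a direct induction using energy-change lemmas of Day--So type. Your route is shorter but is a deduction of the weaker bound from the stronger one rather than an independent proof. Second, the equality characterization of \eqref{eq22} is not stated in this paper (only the inequality is), so that step genuinely rests on the cited reference \cite{Wong-Wang-Chu} or on the reproof you sketch; you flag this honestly, and it is the only external ingredient beyond \eqref{eq22} itself.
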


%Many authors extended the above bound for signed graphs, mixed graphs, gain graphs \cite{Energybound1}, etc.  
The following two results we use frequently in the later sections. 
\begin{theorem}\cite[Theorem 3.4]{Day-So}\label{Th1}
	If $ G $ is a graph with a simple cut set $ E $, then $ \mathcal{E}(G)\geq \mathcal{E}(G-E) $.
\end{theorem}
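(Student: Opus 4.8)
The plan is to translate the statement into matrix analysis and apply a pinching inequality for the nuclear norm. First I would note that, since $A(G)$ is real symmetric, its singular values are precisely $|\lambda_1|,\dots,|\lambda_n|$, so $\mathcal{E}(G)=\sum_{i=1}^{n}|\lambda_i|=\|A(G)\|_*$, the nuclear (trace) norm, that is, the sum of the singular values; the same holds for $G-E$. Hence the statement is equivalent to the matrix inequality $\|A(G)\|_*\ge\|A(G-E)\|_*$.

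Next I would exhibit the relevant block structure. Let $G_1,\dots,G_r$ be the connected components of $G-E$, with vertex sets $V_1,\dots,V_r$ forming a partition of $V(G)$. This is where the hypothesis that the cut set is \emph{simple} enters: it guarantees that deleting $E$ removes only edges of $G$ joining distinct parts and no edge lying inside a single $V_i$, so that $G-E=G[V_1]\oplus\cdots\oplus G[V_r]$ and $E$ is exactly the edge-boundary of this partition. Ordering the vertices of $G$ block by block, $A(G)$ becomes a symmetric block matrix whose $i$-th diagonal block is $A(G_i)$ and whose off-diagonal blocks encode the cut edges; its block-diagonal truncation, the pinching $\mathcal{P}(A(G))$, is then exactly $A(G_1\oplus\cdots\oplus G_r)=A(G-E)$. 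When $E$ separates $G$ into two parts this is just the familiar $2\times 2$ block picture.

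The engine is the pinching inequality $\|\mathcal{P}(M)\|_*\le\|M\|_*$, valid for every symmetric $M$ and every partition of its index set into blocks. For two blocks it is immediate: with $D=\operatorname{diag}(I,-I)$ one has $\mathcal{P}(M)=\tfrac12\bigl(M+DMD\bigr)$, so the triangle inequality and the orthogonal invariance of $\|\cdot\|_*$ give $\|\mathcal{P}(M)\|_*\le\tfrac12\|M\|_*+\tfrac12\|DMD\|_*=\|M\|_*$; for $r$ blocks one iterates, pinching off one block at a time. Taking $M=A(G)$ then gives $\mathcal{E}(G)=\|A(G)\|_*\ge\|\mathcal{P}(A(G))\|_*=\|A(G-E)\|_*=\mathcal{E}(G-E)$, which is the claim.

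The argument is short, so the main thing to be careful about is twofold. First, one should pin down exactly why \emph{simplicity} of the cut set is what makes $A(G-E)$ coincide with a pinching of $A(G)$: for an arbitrary cut set one could delete an edge lying inside one of the resulting components, and then $A(G-E)$ is no longer a pinching of $A(G)$ and this route breaks down. Second, for a self-contained account rather than one quoting the pinching inequality, I would include its short proof — either the orthogonal-averaging identity above, or the dual description $\|M\|_*=\max\{\tr(MW):W=W^{\top},\ -I\preceq W\preceq I\}$ for symmetric $M$, together with the observation that an optimal $W$ for the block-diagonal matrix $\mathcal{P}(M)$ may be chosen block-diagonal (pinching preserves the constraint $-I\preceq W\preceq I$) and is therefore feasible for $M$, so that $\|\mathcal{P}(M)\|_*=\tr(MW)\le\|M\|_*$.
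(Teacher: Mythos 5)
Your argument is correct. Note that the paper itself gives no proof of this statement -- it is quoted verbatim from Day and So -- and your pinching argument (identify $\mathcal{E}$ with the trace norm, write the block-diagonal truncation as $\tfrac12(M+DMD)$ with $D=\operatorname{diag}(I,-I)$, and use the triangle inequality plus orthogonal invariance) is exactly the standard proof given in that reference. You are also right to flag that the hypothesis must force $E$ to be the full edge boundary of the vertex partition determined by the components of $G-E$; that is precisely the role of ``simple'' here, and it is the form in which the present paper actually invokes the result.
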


\begin{theorem}\cite[Theorem 3.6]{Day-So}\label{Th2}
	If $ E $ is a cut set between complimentary induced subgraphs $ M $ and $ N $ of $ G $.  Suppose the edges of $ E $ form a star, then $ \mathcal{E}(G)>\mathcal{E}(G-E) $.
\end{theorem}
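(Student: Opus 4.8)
The plan is to use the variational form of the energy,
\[
\mathcal E(H)=\max\bigl\{\tr\!\bigl(XA(H)\bigr):X=X^{\top},\ -I\preceq X\preceq I\bigr\},
\]
whose maximisers are exactly the symmetric contractions $X$ acting as the identity on the positive eigenspace $E_{>0}\bigl(A(H)\bigr)$, as minus the identity on $E_{<0}\bigl(A(H)\bigr)$, and arbitrarily (subject to $\|X\|\le1$) on $\ker A(H)$; in particular $\operatorname{sgn}(A(H))$ is one, and for any maximiser $X$ the difference $X-\operatorname{sgn}(A(H))$ is supported on $\ker A(H)$, so $A(H)X=A(H)\operatorname{sgn}(A(H))=|A(H)|$. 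Let $v$ be the centre of the star $E$; swapping $M$ and $N$ if needed we may assume $v\in V(N)$ and that the $k\ (\ge1)$ leaves lie in $V(M)$. Ordering the vertices of $M$ before those of $N$, and letting $\mathbf w\in\{0,1\}^{V(M)}$ be the indicator of the leaves,
\[
A(G)=\begin{pmatrix}A(M)&\mathbf w\,e_v^{\top}\\ e_v\,\mathbf w^{\top}&A(N)\end{pmatrix},\qquad
A(G-E)=\begin{pmatrix}A(M)&0\\ 0&A(N)\end{pmatrix},
\]
so $B:=A(G)-A(G-E)$ lives entirely in the two off-diagonal blocks.

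For the weak inequality, put $X_0:=\operatorname{sgn}(A(M))\oplus\operatorname{sgn}(A(N))$, a block-diagonal maximiser for $A(G-E)$. Since $X_0$ is block-diagonal and $B$ is off-diagonal, $\tr(X_0B)=0$, whence
\[
\mathcal E(G)\ \ge\ \tr\!\bigl(X_0A(G)\bigr)=\tr\!\bigl(X_0A(G-E)\bigr)+\tr(X_0B)=\mathcal E(G-E)=\mathcal E(M)+\mathcal E(N)
\]
(one could also quote Theorem~\ref{Th1}, a star cut being a simple cut set).

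It remains to exclude equality. If $\mathcal E(G)=\mathcal E(G-E)$ then $X_0$ is a maximiser for $A(G)$ as well, so $X_0-\operatorname{sgn}(A(G))$ is supported on $\ker A(G)$ and hence $A(G)X_0=|A(G)|$ is symmetric. Comparing the $V(M)\times V(N)$ block of $A(G)X_0$ with the transpose of its $V(N)\times V(M)$ block gives
\[
\mathbf w\,(S_N e_v)^{\top}=(S_M\mathbf w)\,e_v^{\top},\qquad S_M:=\operatorname{sgn}(A(M)),\quad S_N:=\operatorname{sgn}(A(N)).
\]
Since $\mathbf w\ne0$, if $S_N e_v\ne0$ this rank-one identity forces $S_N e_v$ to be a nonzero scalar multiple of $e_v$; as $S_N$ is a symmetry, $e_v$ then lies in $E_{>0}\bigl(A(N)\bigr)$ or in $E_{<0}\bigl(A(N)\bigr)$, so $\langle A(N)e_v,e_v\rangle\ne0$ — impossible, because $\langle A(N)e_v,e_v\rangle=A(N)_{vv}=0$ ($G$ is loopless). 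Hence $S_N e_v=0$; then, with $\widehat e_v:=(0,e_v)\in\mathbb R^{V(G)}$, we have $X_0\widehat e_v=0$, so $|A(G)|\widehat e_v=A(G)X_0\widehat e_v=0$ and therefore $A(G)\widehat e_v=0$. But the block form of $A(G)$ gives $A(G)\widehat e_v=(\mathbf w,0)\ne0$, a contradiction. Thus equality is impossible and $\mathcal E(G)>\mathcal E(G-E)$.

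The step I expect to require the most care is the characterisation of \emph{all} maximisers of $X\mapsto\tr(XA(G))$: when $A(G)$ is singular the maximiser is not unique, and only the full description — not merely the assertion $X=\operatorname{sgn}(A(G))$ — delivers $A(G)X_0=|A(G)|$, which drives the argument. The remaining ingredients are structurally robust: the block-diagonality of $\operatorname{sgn}(A(G-E))=\operatorname{sgn}(A(M))\oplus\operatorname{sgn}(A(N))$, which is exactly where the hypothesis that $E$ joins \emph{complementary induced} subgraphs is used, and the vanishing of the diagonal of an adjacency matrix, which forbids $e_v$ from sitting inside a single sign-eigenspace of $A(N)$.
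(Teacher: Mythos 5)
Your proof is correct. Note first that the paper itself offers no argument here: the statement is quoted from Day--So and used as a black box, so the only comparison available is with the original source. Your route is the standard trace-norm duality one: the variational formula $\mathcal{E}(H)=\max\{\tr(XA(H)): X=X^{\top},\ -I\preceq X\preceq I\}$ immediately gives the weak inequality via the block-diagonal test matrix $X_0=\operatorname{sgn}(A(M))\oplus\operatorname{sgn}(A(N))$, and you correctly identify that the delicate point is the equality analysis, which you handle properly: all maximisers agree with $\operatorname{sgn}(A(G))$ off $\ker A(G)$, hence $A(G)X_0=|A(G)|$ must be symmetric, and the symmetry of the off-diagonal blocks $\mathbf{w}(S_Ne_v)^{\top}=(S_M\mathbf{w})e_v^{\top}$ is exactly where the rank-one (star) structure of the cut enters. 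Both branches of the dichotomy are sound: if $S_Ne_v\neq 0$ the rank-one identity forces $e_v$ into a definite sign-eigenspace of $A(N)$, contradicting $A(N)_{vv}=0$; if $S_Ne_v=0$ then $e_v\in\ker A(N)$, so $X_0\widehat{e}_v=0$ and $\ker|A(G)|=\ker A(G)$ forces $A(G)\widehat{e}_v=0$, contradicting $\mathbf{w}\neq 0$. This is essentially a reconstruction of Day and So's own proof, which likewise rests on the Ky Fan--type inequality $\mathcal{E}\begin{pmatrix}A&B\\ B^{\top}&C\end{pmatrix}\geq\mathcal{E}(A)+\mathcal{E}(C)$ together with an analysis of the equality case exploiting that $B$ has rank one and that adjacency matrices have zero diagonal; your version makes the equality analysis cleaner by phrasing everything through the maximiser characterisation rather than through singular-value decompositions. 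One cosmetic remark: the formula $A(G)\widehat{e}_v=(\mathbf{w},0)$ is only valid because $S_Ne_v=0$ already implies $A(N)e_v=0$ in that branch; it would be worth saying so explicitly, but it is not a gap.
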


\begin{lemma}\label{lm2.1}
	For any cycle $ C_n $ with $ n $ vertices, \begin{equation*}
		\mathcal{E}(C_n)= \left\{ \begin{array}{ccc}
			4\frac{\cos\frac{\pi}{n}}{\sin \frac{\pi}{n}} & \mbox{for} & n\equiv 0 \mbox{ mod 4}\\
			\frac{4}{\sin \frac{\pi}{n}} & \mbox{for} & n\equiv 2 \mbox{ mod 4}\\
			\frac{2}{\sin \frac{\pi}{2n}} & \mbox{for} & n\equiv 1 \mbox{ mod 2}.
		\end{array}
		\right.
	\end{equation*}
\end{lemma}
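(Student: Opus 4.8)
The plan is to compute the eigenvalues of $C_n$ explicitly and then sum their absolute values, splitting into the three residue cases. The adjacency eigenvalues of the cycle $C_n$ are well known to be $\lambda_j = 2\cos\frac{2\pi j}{n}$ for $j = 0, 1, \dots, n-1$, which follows from the fact that $A(C_n)$ is a circulant matrix (or by directly checking that the vectors $(\omega^{jk})_{k}$ with $\omega = e^{2\pi i/n}$ are eigenvectors). So $\mathcal{E}(C_n) = \sum_{j=0}^{n-1} \bigl|2\cos\frac{2\pi j}{n}\bigr| = 2\sum_{j=0}^{n-1} \bigl|\cos\frac{2\pi j}{n}\bigr|$, and everything reduces to evaluating this trigonometric sum.

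First I would handle the odd case $n \equiv 1 \pmod 2$. Here the angles $\frac{2\pi j}{n}$ for $j=0,\dots,n-1$ range over all multiples of $\frac{2\pi}{n}$, and since $n$ is odd, the map $j \mapsto 2j \bmod n$ is a bijection of $\{0,\dots,n-1\}$; hence $\sum_{j} |\cos\frac{2\pi j}{n}| = \sum_{j}|\cos\frac{\pi j}{n}|$. Now $\cos\frac{\pi j}{n}$ is positive for $j < n/2$ and negative for $j > n/2$, and by the symmetry $\cos\frac{\pi(n-j)}{n} = -\cos\frac{\pi j}{n}$ the sum becomes $2\sum_{j=0}^{(n-1)/2}\cos\frac{\pi j}{n}$ minus the doubled $j=0$ term, i.e. $2\Re\sum_{j=0}^{(n-1)/2} e^{i\pi j/n} - 1$ or a similar closed form. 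I would then evaluate the finite geometric sum $\sum_{j=0}^{m} z^j = \frac{z^{m+1}-1}{z-1}$ with $z = e^{i\pi/n}$ and $m = (n-1)/2$, and simplify using half-angle identities to reach $\frac{2}{\sin\frac{\pi}{2n}}$. The cases $n \equiv 0 \pmod 4$ and $n \equiv 2 \pmod 4$ are similar in spirit: when $n$ is even the value $j = n/2$ contributes $|\cos\pi| = 1$, and the distribution of signs of $\cos\frac{2\pi j}{n}$ depends on whether $n/2$ is even or odd, which is exactly the $\bmod 4$ dichotomy; each case again reduces to a geometric sum that telescopes into a ratio of sines/cosines of $\frac{\pi}{n}$.

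The main obstacle I expect is purely bookkeeping: carefully tracking which indices $j$ give $\cos\frac{2\pi j}{n} \geq 0$ versus $< 0$, handling the boundary indices (where the cosine may be zero, contributing nothing, or $\pm 1$) correctly in each of the three cases, and then pushing the resulting geometric sums through the half-angle and product-to-sum identities without sign errors. There is no conceptual difficulty — the eigenvalues are classical — but the three cases must be organized cleanly. An alternative that sidesteps some casework is to use the known Chebyshev/trace formula $\sum_j |\cos\theta_j| = \frac{1}{n}\sum_{\text{something}}$ via the Fourier expansion of $|\cos\theta|$, but I would prefer the direct geometric-sum approach since it yields the stated closed forms most transparently. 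Finally I would double-check each formula against a small case ($C_3$, $C_4$, $C_6$) to confirm the constants.
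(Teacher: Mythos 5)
Your proposal is correct and is the standard derivation of this classical formula: the paper itself offers no proof of Lemma~\ref{lm2.1}, stating it as a known result, and your route via the circulant eigenvalues $2\cos\frac{2\pi j}{n}$ followed by the geometric/Dirichlet-kernel sums is exactly how it is proved in the literature. The two delicate points you flag both check out: for odd $n$ the bijection $j\mapsto 2j \bmod n$ legitimately converts the sum to $\sum_j|\cos\frac{\pi j}{n}|$, and for $4\mid n$ the indices $j=n/4,\,3n/4$ contribute zero, which is what separates the two even cases; carrying out the sums as you describe reproduces all three closed forms (e.g. $n=6$ gives $2+1+1+2+1+1=8=4/\sin\frac{\pi}{6}$).
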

%Let $ G $ be a connected graph with vertex set $ V(G)=\{ v_1, v_2, \dots, v_n\} $. For a vertex $ v_i $, we denote $ G-v_i $ as a subgraph of $ G $ after removing the vertex $ v_i $ and its adjacent edges from $ G $. A vertex $ v_i \in V(G) $ is called a \textit{cut-vertex} of $ G $ if $ G-v_i $ is disconnected graph. A \textit{block} of the graph $ G $ is a maximal connected subgraph of $ G $ that has no cut-vertex

%%%%%%%%%%%%%%%%%%%%%%%%%%%%%%%%%%%%%%%%%%%%%%%%
\section{Graph energy in terms of the vertex cover number}

We begin this section with some basic class of graphs $ G $ for which $ \mathcal{E}(G)\geq 2\tau(G)$ holds.

\begin{prop}\label{prop3.1}
	\begin{itemize}
		\item[(1)] Let $ G $ be a complete graph. Then $ \mathcal{E}(G)=2\tau(G) $.
		\item[(2)] Let $ G $ be a bipartite graph. Then $ \mathcal{E}(G)\geq 2\tau(G) $.
		\item [(3)] Let $ G $ be a cycle. Then $ \mathcal{E}(G)\geq 2\tau(G) $.
	\end{itemize}
\end{prop}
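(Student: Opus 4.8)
The plan is to handle the three parts essentially independently: the explicit spectrum of $K_n$ for (1), K\"onig's theorem together with the matching bound \eqref{eq22} for (2), and the energy formula of Lemma \ref{lm2.1} for the cycle case (3).

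For (1), I would recall that $\spec(K_n)=\{\,n-1,\ (-1)^{(n-1)}\,\}$, so that $\mathcal{E}(K_n)=(n-1)+(n-1)\cdot 1=2(n-1)$. On the other hand, a minimum vertex cover of $K_n$ can omit at most one vertex (otherwise the edge joining two omitted vertices would be uncovered), so $\tau(K_n)=n-1$, and hence $\mathcal{E}(K_n)=2\tau(K_n)$.

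For (2), the key input is K\"onig's theorem: for a bipartite graph $G$ one has $\tau(G)=\mu(G)$. Combining this identity with the bound $\mathcal{E}(G)\ge 2\mu(G)$ from \eqref{eq22} gives $\mathcal{E}(G)\ge 2\tau(G)$ at once.

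For (3), write $C_n$ for the cycle on $n$ vertices; since its independence number is $\lfloor n/2\rfloor$, we have $\tau(C_n)=\lceil n/2\rceil$. If $n$ is even, then $C_n$ is bipartite, so part (2) already yields $\mathcal{E}(C_n)\ge 2\tau(C_n)$; thus the even-cycle case needs no trigonometry at all. It remains to treat odd $n=2k+1$, where $\tau(C_n)=k+1$ and, by Lemma \ref{lm2.1}, $\mathcal{E}(C_n)=\dfrac{2}{\sin(\pi/2n)}$. The inequality to be proved therefore reduces to $\sin(\pi/2n)\le \tfrac{2}{n+1}$, and using $\sin x< x$ it suffices to verify $\tfrac{\pi}{2n}\le\tfrac{2}{n+1}$, i.e. $\pi(n+1)\le 4n$, which holds for all $n\ge 4$. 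The only remaining odd value is $n=3$, where $C_3=K_3$ is covered by part (1) (indeed $\mathcal{E}(C_3)=4=2\tau(C_3)$).

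All the estimates here are elementary, so I do not expect a genuine obstacle; the only points needing a little care are the small cases of the odd cycle — equivalently, recognising $C_3=K_3$ and $C_4=K_{2,2}$ as the equality instances — and the observation that, once part (2) is in hand, the even cycles require no separate argument.
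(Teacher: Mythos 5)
Your proposal is correct and follows essentially the same route as the paper: parts (1) and (2) are the standard spectrum-of-$K_n$ and K\"onig-plus-\eqref{eq22} arguments the paper leaves as ``easy to observe,'' and for (3) you use bipartiteness for even $n$ and Lemma \ref{lm2.1} with $\sin x<x$ for odd $n$, handling $C_3=K_3$ separately exactly as the paper does.
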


\begin{proof}
	Part $ (1) $ and $ (2) $ are easy to observe.\\
	$ (3) $ If $ n $ is even, then $ C_n $ is bipartite. Then $ \mathcal{E}(C_n)\geq 2\mu(C_n)=2\tau(C_n) $. If $ n $ is odd. Then $ n\equiv 1 $ mod $ 2 $. That is $ n=2k+1 $, for $ k=1,2, \cdots $. Now $ \frac{\pi}{2n}< \frac{\pi}{4} $, so $ \sin x<x $. Therefore, $ \sin \frac{\pi}{2n}<\frac{\pi}{2n} $. Now by Lemma \ref{lm2.1}, $ \mathcal{E}(C_n)=\frac{2}{\sin \frac{\pi}{2n}} > \frac{2}{\pi/2n}=\frac{4n}{\pi}$. If \begin{equation}\label{eq1}
		\frac{4n}{\pi}\geq 2\tau(C_n),
	\end{equation} where $ \tau(C_n)=\frac{n+1}{2}=k+1 $. Now $\eqref{eq1} $ is true if and only if $ \frac{4(2k+1)}{\pi}\geq 2(k+1) $, i.e., $ k\geq 2 $. For $ k=1 $, $ C_3 $ is complete. Thus the result follows.
\end{proof}

\begin{lemma}
	Let $ u$ be a quasi-pendent vertex of $ G $ and $ \mathcal{E}(G-u)\geq 2\tau(G-u) $. Then $ \mathcal{E}(G)\geq 2\tau(G) $.
\end{lemma}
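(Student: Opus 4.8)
The plan is to compare $G$ with the induced subgraph $G-u$: I would show that deleting $u$ decreases the vertex cover number by exactly $1$ and decreases the energy by \emph{at least} $2$, after which the hypothesis $\mathcal{E}(G-u)\ge 2\tau(G-u)$ finishes the job. Throughout, fix a pendent vertex $v$ of $G$ whose only neighbour is $u$; such a $v$ exists because $u$ is quasi-pendent. For the vertex cover count, adjoining $u$ to a minimum cover of $G-u$ produces a cover of $G$, so $\tau(G)\le\tau(G-u)+1$; conversely, since every vertex cover of $G$ contains $u$ or $v$, the standard exchange argument (if a minimum cover uses $v$ but not $u$, swap $v$ for $u$) yields a minimum cover of $G$ containing $u$, and removing $u$ from it leaves a cover of $G-u$, so $\tau(G-u)\le\tau(G)-1$. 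Hence $\tau(G)=\tau(G-u)+1$.

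The energy estimate $\mathcal{E}(G)\ge\mathcal{E}(G-u)+2$ is the crux, and the obvious attempt is too weak: applying Theorem~\ref{Th2} to the star formed by all edges incident to $u$ only yields the strict but unquantified bound $\mathcal{E}(G)>\mathcal{E}(G-u)$, which cannot absorb the extra ``$2$''. The right choice is a smaller cut. Let $M=G[\{u,v\}]\cong K_2$ and $N=G-u-v=G[V(G)\setminus\{u,v\}]$, and let $E$ be the edge cut between these complementary induced subgraphs. Since $v$ has no neighbour outside $\{u\}\subseteq V(M)$, every edge of $E$ is incident to $u$, so $E$ is a star (or empty). If $\deg_G(u)\ge 2$, then $E\ne\emptyset$ and Theorem~\ref{Th2} gives $\mathcal{E}(G)>\mathcal{E}(G-E)=\mathcal{E}(M)+\mathcal{E}(N)=2+\mathcal{E}(G-u-v)$; if $\deg_G(u)=1$, then $E=\emptyset$, $G=K_2\oplus(G-u-v)$, and $\mathcal{E}(G)=2+\mathcal{E}(G-u-v)$. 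In either case $\mathcal{E}(G)\ge\mathcal{E}(G-u-v)+2$, and since $v$ is isolated in $G-u$ we have $\mathcal{E}(G-u-v)=\mathcal{E}(G-u)$, so $\mathcal{E}(G)\ge\mathcal{E}(G-u)+2$.

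Combining the two steps with the hypothesis, $\mathcal{E}(G)\ge\mathcal{E}(G-u)+2\ge 2\tau(G-u)+2=2\tau(G)$, as required. The only delicate points are verifying that $M$, $N$, and $E$ genuinely meet the hypotheses of Theorem~\ref{Th2}---namely that they are complementary induced subgraphs joined by a star-shaped cut---and treating separately the degenerate case $\deg_G(u)=1$, where Theorem~\ref{Th2} does not apply but the identity $\mathcal{E}(G)=2+\mathcal{E}(G-u-v)$ holds for free. Everything else is routine bookkeeping.
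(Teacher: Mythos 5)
Your proposal is correct and follows essentially the same route as the paper: the same exchange argument gives $\tau(G-u)=\tau(G)-1$, and the same cut set (the edges from $u$ to $V(G)\setminus\{v\}$) splits $G$ into $K_2\oplus(G-u-v)$, yielding $\mathcal{E}(G)\ge \mathcal{E}(G-u)+2$. The only difference is that you explicitly handle the degenerate case $\deg_G(u)=1$, where the cut is empty and Theorem~\ref{Th2} does not apply, which the paper glosses over.
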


\begin{proof} 
	Let $v$ be a pendent vertex of $G$ such that  $ u\sim v $. Then any minimum vertex cover, say $ U $ of $ G $ must contain either $ u $ or $ v $. If it contains $ v $, then we can replace $ v $ by $ u $, and it will be still a minimum vertex cover of $ G $. Now we remove vertex $ u $, then $ \tau(G-u) =\tau(G)-1$. Let $ E $ be the edges from $ u $ to other vertices of $ V(G)\setminus\{v\} $. Then $ \mathcal{E}(G)\geq \mathcal{E}(G-E)=\mathcal{E}(G-u)\oplus[e_{u,v}] \geq 2\tau(G-u)+2=2\tau(G)$.	
\end{proof}

\begin{theorem}\label{Th3.1}
	If $ G $ is a tree with vertex cover number $ \tau(G) $, then $ \mathcal{E}(G)\geq 2\tau(G) $. Equality occurs if and only if $ G $ is an edge.
\end{theorem}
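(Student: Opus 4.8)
The inequality is immediate: a tree is bipartite, so $\mathcal{E}(G)\ge 2\tau(G)$ is already contained in Proposition~\ref{prop3.1}(2) (equivalently, combine the fact that $\tau(G)=\mu(G)$ for bipartite graphs with \eqref{eq22}). Thus the real content is the equality characterization, and I would treat it in two halves. The ``if'' direction is a one-line computation: $A(K_2)$ has eigenvalues $\pm 1$, so $\mathcal{E}(K_2)=2=2\tau(K_2)$. For the ``only if'' direction I plan to show that every tree $G$ on $n\ge 3$ vertices satisfies the \emph{strict} inequality $\mathcal{E}(G)>2\tau(G)$; together with the trivial one-vertex case (where both sides are $0$, and which is tacitly excluded since $K_1$ is not an edge), this forces any tree attaining equality to be $K_2$.

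To obtain strictness for $n\ge 3$: since $G$ is a connected tree with at least three vertices, it has a leaf $v$ whose neighbour $u$ satisfies $\deg_G(u)\ge 2$ (if $\deg_G(u)=1$, the edge $uv$ would be an entire component, contradicting connectedness together with $n\ge 3$). Let $E$ be the set of edges incident with $u$ other than $uv$. Because $v$ is a leaf, $E$ is exactly the cut set separating the induced subgraph on $\{u,v\}$ (a copy of $K_2$) from the complementary induced subgraph on $V(G)\setminus\{u,v\}$, and all edges of $E$ share the vertex $u$, so $E$ forms a star. Theorem~\ref{Th2} then gives
\[
\mathcal{E}(G)\;>\;\mathcal{E}(G-E)\;=\;\mathcal{E}(K_2)\;+\;\mathcal{E}\bigl(G-\{u,v\}\bigr)\;=\;2+\mathcal{E}\bigl(G-\{u,v\}\bigr).
\]
Now $G-\{u,v\}$ is a forest, hence bipartite, so $\mathcal{E}(G-\{u,v\})\ge 2\tau(G-\{u,v\})$ by Proposition~\ref{prop3.1}(2); and, since $v$ is a leaf, a minimum vertex cover of $G$ may be chosen to contain $u$, so deleting $u$ from it yields $\tau(G)=\tau(G-\{u,v\})+1$, just as in the proof of the preceding Lemma. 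Combining, $\mathcal{E}(G)>2+2\tau(G-\{u,v\})=2\tau(G)$, which is the desired strict inequality.

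I do not foresee a genuine obstacle; the two points requiring care are (i) verifying that $E$ is the cut set between \emph{complementary induced} subgraphs, so that Theorem~\ref{Th2} genuinely applies — this is exactly why the chosen leaf's neighbour $u$ must have degree at least $2$ — and (ii) correctly tracking the vertex cover number through the deletion, i.e.\ that removing the quasi-pendent vertex $u$ lowers $\tau$ by precisely one. As a shortcut for the equality case one could instead invoke the equality statement of \cite[Theorem~4.2]{Wang-Ma} with $c(G)=0$: among disjoint unions of complete bipartite graphs $K_{p,p}$ and isolated vertices, the only tree is $K_{1,1}=K_2$, since $K_{p,p}$ contains a $4$-cycle once $p\ge 2$.
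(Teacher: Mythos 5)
Your proof is correct and rests on essentially the same device as the paper's: detach a pendant edge $uv$ from the rest of the tree via the star cut set at the quasi-pendent vertex $u$, apply Theorem~\ref{Th2} to get the strict inequality, and track $\tau(G)=\tau(G-\{u,v\})+1$. The only (harmless) difference is that you finish by applying the bipartite bound of Proposition~\ref{prop3.1}(2) to the remaining forest, whereas the paper closes the loop by induction on $\tau$; your side remark that the isolated vertex is a tacitly excluded second equality case is also a fair observation about the statement.
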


\begin{proof}
	If $ \tau(G)=1 $, then $ G $ is some star $ S_n $ and $ \mathcal{S}_n=2\sqrt{n}\geq 2\tau(G) $. Now for any tree $ H $ such that $ \tau(H)<\tau(G) $, $ \mathcal{E}(H)\geq 2\tau(H) $. Suppose $ G $ is a tree with $ \tau(G)\geq2 $. Let $ u $ be a pendent vertex in $ G $ and $ u \sim v $. Then $ \tau(G-v)=\tau(G)-1 $. Consider a cut set $ E=\{ e_{u,w}: w\in N(v)\setminus \{ u\}\} $. Since the shape of $ E $ is a star, so by Theorem \ref{Th2}, and induction hypothesis, $ \mathcal{E}(G)>\mathcal{E}(G-E)=\mathcal{E}(G-v)+\mathcal{E}(e_{uv})=2+2(\tau(G)-1)=2\tau(G) $. If $ G $ is an edge, then $ \mathcal{E}(G)=2\tau(G) $. Suppose $ G $ is not an edge, then by previous observation, $\mathcal{E}(G)>2\tau(G) $.
\end{proof}

\begin{figure}
	\begin{center}
		\includegraphics[scale= 0.69]{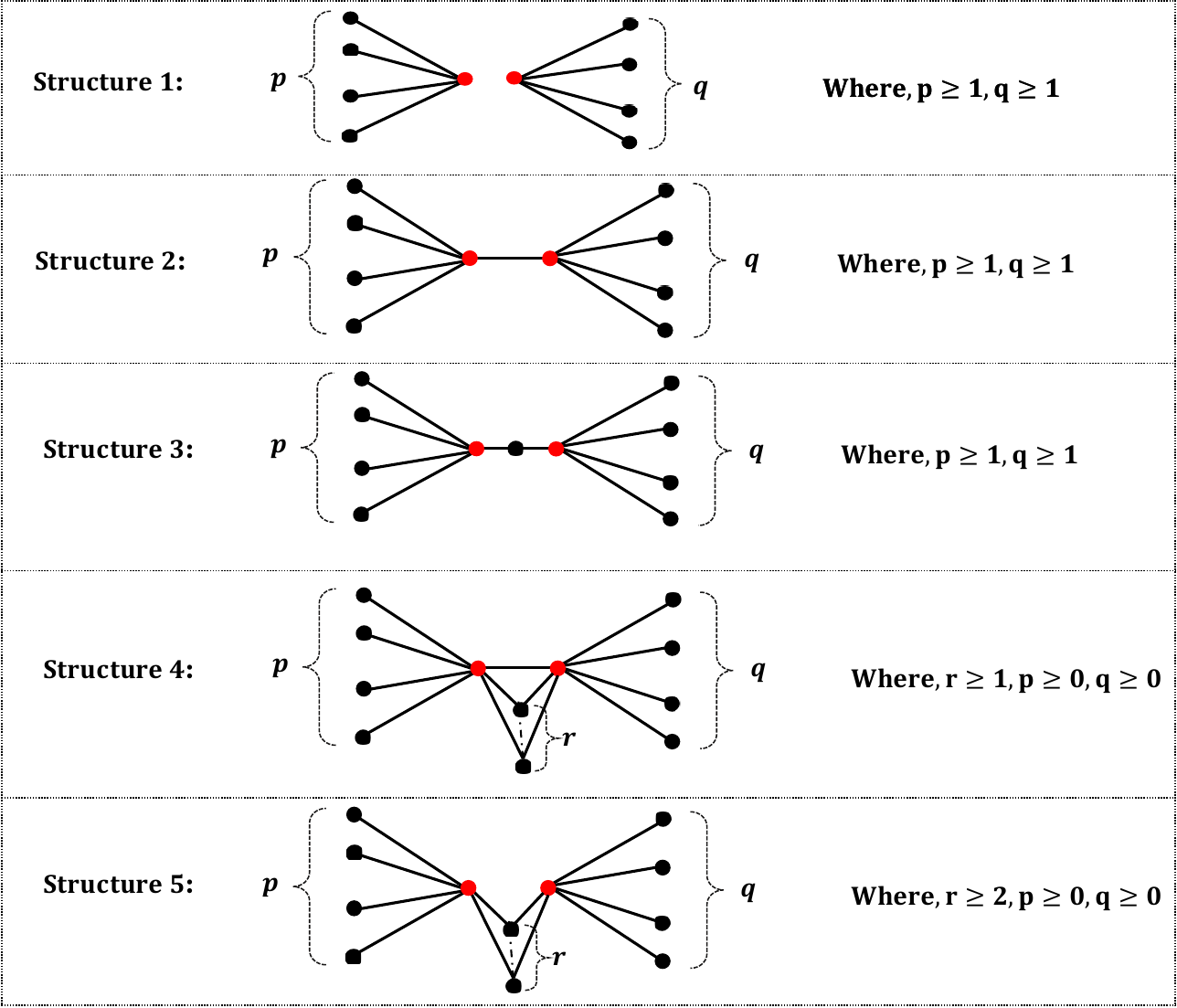}
		\caption{Graphs with vertex cover number 2} \label{fig1}
	\end{center}
\end{figure}

\begin{lemma}
	Let $ G $ be a graph with $ \tau=2 $. Then $ \mathcal{E}(G)\geq 2\tau $.
\end{lemma}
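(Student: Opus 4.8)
The plan is to argue by a short case analysis on the matching number $\mu(G)$, using two tools already available: the inequality $\tau(G)\ge\mu(G)$, valid for every graph, and the bound \eqref{eq22} of Wong \etal, namely $\mathcal{E}(G)\ge 2\mu(G)$. Since $\tau(G)=2$ the graph has at least one edge, so $\mu(G)\ge 1$; and $\mu(G)\le\tau(G)=2$. Hence $\mu(G)\in\{1,2\}$.

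If $\mu(G)=2$, then \eqref{eq22} gives at once $\mathcal{E}(G)\ge 2\mu(G)=4=2\tau(G)$, and we are done.

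It remains to treat $\mu(G)=1$. Here I would use the elementary observation that a graph whose maximum matching consists of a single edge is, after deleting its isolated vertices, either a star $K_{1,m}$ or the triangle $K_3$: fix an edge $uv$; every other edge must meet $\{u,v\}$ (else it is disjoint from $uv$, giving a matching of size $2$); if some edge $uw$ with $w\ne v$ and some edge $vz$ with $z\ne u$ satisfied $w\ne z$, then $uw$ and $vz$ would be disjoint, again contradicting $\mu(G)=1$, so either every edge is incident to a common vertex (a star) or the non-trivial component is exactly the triangle $uvw$. A star has vertex cover number $1$, which is excluded by $\tau(G)=2$; therefore $G$ is $K_3$ together with possibly some isolated vertices. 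Since $\spec(K_3)=\{2,-1,-1\}$, this gives $\mathcal{E}(G)=\mathcal{E}(K_3)=4=2\tau(G)$.

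I do not expect a genuine obstacle: the only step needing an argument is the classification of graphs with matching number one, which is routine. For completeness I would also record an alternative, more computational route matching Figure~\ref{fig1}: a minimum cover $\{x,y\}$ forces $V(G)\setminus\{x,y\}$ to be an independent set that splits into the classes of vertices adjacent to $x$ only, to $y$ only, and to both, with at most the extra edge $xy$; the adjacency matrix then has an equitable quotient of order at most $5$ while all its remaining eigenvalues vanish, so $\mathcal{E}(G)$ is the sum of the moduli of at most five explicit eigenvalues, and $\mathcal{E}(G)\ge 4$ follows by inspecting these finitely many shapes. The matching argument being shorter, that is the one I would write up, with the enumeration illustrated by the figure.
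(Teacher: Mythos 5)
Your proof is correct, and it takes a genuinely different and more self-contained route than the paper. The paper's proof enumerates the possible structures of a graph with $\tau=2$ (Figure \ref{fig1}), observes that several of them are bipartite---where the bound follows from Proposition \ref{prop3.1}(2)---and then simply asserts the inequality for the remaining (non-bipartite) structures without further argument. You bypass the structural enumeration entirely: since $1\le\mu(G)\le\tau(G)=2$, either $\mu(G)=2$, in which case \eqref{eq22} gives $\mathcal{E}(G)\ge 2\mu(G)=4=2\tau(G)$ at once, or $\mu(G)=1$, in which case the standard classification of graphs with matching number one (a star or a triangle, plus isolated vertices) combined with $\tau(G)=2$ forces $G$ to be $K_3$ together with isolated vertices, where $\mathcal{E}(G)=4$. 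Your classification step is complete and correct (two nontrivial components, or two edges missing a common vertex, would yield two independent edges and contradict $\mu=1$), and it also correctly rules out the star via $\tau(K_{1,m})=1$. What your approach buys is a complete, figure-free argument that also makes transparent where equality can occur; what the paper's approach would buy, if the omitted cases were actually checked, is an explicit catalogue of the graphs with $\tau=2$. Your argument moreover isolates the general principle underlying the whole paper: the only obstruction to deducing $\mathcal{E}(G)\ge 2\tau(G)$ directly from \eqref{eq22} is the gap $\tau(G)-\mu(G)$, and for $\tau=2$ that gap can only be realized by a triangle.
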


\begin{proof}
	We can observe that a graph with vertex cover number $ \tau=2 $ has any one of the following structures shown in Figure \ref{fig1}. If $ G $ has structure 1, 2 and 3, then $ G $ is bipartite. Therefore, $ \mathcal{E}(G)\geq 2\tau $. Also, for other structures $ G $, $ \mathcal{E}(G)\geq 2\tau $. 
\end{proof}

\begin{figure}
	\begin{center}
		\includegraphics[scale= 0.60]{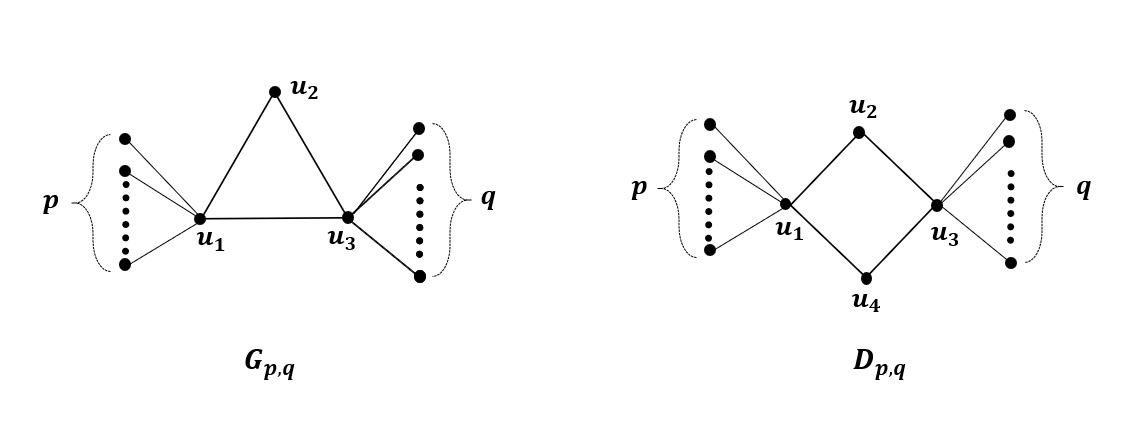}
		\caption{Graphs $ G_{p,q} $ and $ D_{p,q} $} \label{fig2}
	\end{center}
\end{figure}

\begin{lemma}\label{Lm1}
	If $ G $ is either $ G_{p,q} $ or $ D_{p,q} $, where $ p,q \geq 0 $, shown in Figure \ref{fig2}. Then\break $\mathcal{E}(G)\geq 2\tau(G) $.
\end{lemma}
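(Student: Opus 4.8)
The plan is to handle both families by stripping the pendant vertices off one at a time and inducting, with the few irreducible small graphs dealt with by a direct spectral computation. Write $G$ for $G_{p,q}$ (resp. $D_{p,q}$); as drawn in Figure \ref{fig2}, $G$ is obtained from a small fixed core by appending $p$ pendant vertices at one designated core vertex $u$ and $q$ at another designated vertex $w$. First I would dispose of the degenerate parameter values: when $p=0$ or $q=0$ the graph collapses to a tree, a cycle, a bipartite graph, or a graph with $\tau\le 2$, so the bound is already available from Proposition \ref{prop3.1}, Theorem \ref{Th3.1}, or the lemmas on trees and on $\tau=2$ graphs; the same earlier results also cover the bare cores themselves.

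For $p,q\ge 1$, fix a pendant vertex $v$ adjacent to $u$ and let $E$ be the set of edges joining $u$ to $N(u)\setminus\{v\}$. Then $E$ is a star centred at $u$ and is a cut set between the two-vertex induced subgraph on $\{u,v\}$ and its complement, so Theorem \ref{Th2} gives $\mathcal{E}(G)>\mathcal{E}(G-E)$. Here $G-E$ is the disjoint union of the edge $uv$, the $p-1$ now-isolated former pendants at $u$, and a residual graph $G'$ which is again one of $G_{p',q'}$, $D_{p',q'}$ with smaller parameters, or one of the graphs already handled above; in particular $\mathcal{E}(G-E)=2+\mathcal{E}(G')$. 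The key bookkeeping fact is that appending pendant vertices to a vertex that already lies in some minimum vertex cover leaves $\tau$ unchanged, whereas appending the first pendant to a non-forced vertex raises $\tau$ by exactly one; using this one checks $\tau(G)=\tau(G')+1$, so the inductive hypothesis $\mathcal{E}(G')\ge 2\tau(G')$ yields $\mathcal{E}(G)>2+2\tau(G')=2\tau(G)$. As an alternative for cores whose energy is already large, one may instead delete all $p+q$ pendant edges at once: that is a simple cut set, so Theorem \ref{Th1} gives $\mathcal{E}(G)\ge\mathcal{E}(\text{core})$, and whenever the value of $\mathcal{E}(\text{core})$ furnished by Lemma \ref{lm2.1} or a direct computation already exceeds $2\tau(G)$ we are finished.

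The induction bottoms out at a handful of small graphs — essentially $G_{1,1}$, $D_{1,1}$ and the cores — and here I would compute the spectrum by hand, exploiting that $k$ mutually twin pendant vertices at a common neighbour contribute $k-1$ eigenvalues equal to $0$, so the nonzero part of the spectrum is that of a small weighted graph obtained from the core by replacing the pendant bundle at $u$ (resp. $w$) by a single pendant edge of weight $\sqrt{p}$ (resp. $\sqrt{q}$). Summing the absolute values of the roots of the resulting low-degree characteristic polynomial and comparing with the constant $2\tau(G)$ reduces the claim to an explicit inequality in $p$ and $q$. I expect this last comparison to be the main obstacle: the gap $\mathcal{E}(G)-2\tau(G)$ is smallest exactly for these small graphs, so the cut-set estimates, which only give strict inequalities by an unquantified amount, may not close the argument, and an exact spectral computation together with a short real-analysis estimate of the shape $2\sqrt{p}+2\sqrt{q}+(\text{core contribution})\ge 2\tau(G)$ will be needed; a secondary point requiring care is checking that every component produced by the cut-set deletions falls under an already-proved result and that the vertex-cover accounting is an exact equality rather than an inequality pointing the wrong way.
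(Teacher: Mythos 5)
Your proposal is correct and is essentially the paper's argument: the paper likewise applies the Day--So cut-set theorems once to split $G_{p,q}$ (resp.\ $D_{p,q}$) into a single edge plus a star $K_{1,p}$ (resp.\ $K_{1,p+1}$) and concludes from $2+2\sqrt{k}\geq 4=2\tau(G)$, the only cosmetic difference being that the paper isolates the core edge opposite the pendant bundle rather than a pendant edge. The base-case spectral computations you worry about never arise: after one cut every residual component is an edge, a star, $C_3$ or $C_4$, and since $\tau\equiv 2$ for both families the bookkeeping is immediate.
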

\begin{proof}
	For $ p,q \geq 0 $, $ \tau(G_{p,q})=2=\tau(D_{p,q}) $. If $p=q=0$, then $G_{p,q}=C_3$. Therefore, $ \mathcal{E}(G_{0,0})\geq 2\tau(G_{0,0}) $. Suppose at least one of $ p, q$ is non-zero, say $ p\ne 0 $. Then $\mathcal{E}(G_{p,q})\geq \mathcal{E}([e_{u_2,u_3}])+\mathcal{E}(K_{1,p})=2+2\sqrt{p}\geq 2\tau(G_{p,q})$. Also, for any $ p,q $, $\mathcal{E}(D_{p,q})\geq \mathcal{E}([e_{u_2,u_3}])+\mathcal{E}(K_{1,p+1})=2+2\sqrt{p+1}\geq 2\tau(D_{p,q})$. 
  \end{proof}
%\begin{theorem}
%	For any unicyclic graph $ G $ with matching number $ \tau(G) $, then $ \mathcal{E}(G)\geq 2\tau(G) $.
%\end{theorem}
%\begin{proof}
%If $ \tau(G)=2$, then $ G $ is either $ D_{p,q} $ or $ G_{p,q} $, for some $ p,q\geq 0 $(see Figure \ref{fig2}). Therefore by Lemma \ref{Lm1}, $\mathcal{E}(G)\geq 2\tau(G)$. Now for any unicyclic graph $ H $ with $ \tau(H)<\tau(G) $, we have $ \mathcal{E}(H)\geq 2\tau(H) $. Let $G$ be any unicyclic graph of vertex cover number $\tau(G)$. If $G$ has a pendent vertex, say $v$, then its unique adjacent vertex, say $u$ must be either a vertex of the cycle or outside of the cycle. Now any minimum vertex cover must contain either $ v $ or $ u $. It can be observed that $\tau(G-u)=\tau(G)-1 $. By induction vertex cover number, $\mathcal{E}(G-u)\geq2\tau(G-u) $. Let $ E=N(u)\setminus\{v\} $. Then \begin{align*}
%	 \mathcal{E}(G)\geq \mathcal{E}(G-E)&= \mathcal{E}(G-u\oplus[e_{u,v}])\\
%	 &=\mathcal{E}(G-u)+\mathcal{E}([e_{u,v}])\\
%	 &\geq 2\tau(G-u)+2\geq 2(\tau(G)-1)+2=2\tau(G).
%\end{align*}
%
%If $ G $ contains no pendent vertices, then $ G $ must be a cycle $ C_n $. Then by Lemma \ref{Lm2}, $ \mathcal{E}(C_n)\geq 2\tau(C_n) $. Therefore, by mathematical induction on $ \tau(G) $, result follows.
%
%\end{proof}

Let us denote $ G-C $ by an induced subgraph of $ G $ with vertices $ V(G)\setminus V(C) $, where $ C $ is a cycle in $ G $. For an edge $ e\in E(G) $, $ G-e $ is obtained from $ G $ by removing $ e $.

\begin{figure}
	\begin{center}
		\includegraphics[scale= 0.50]{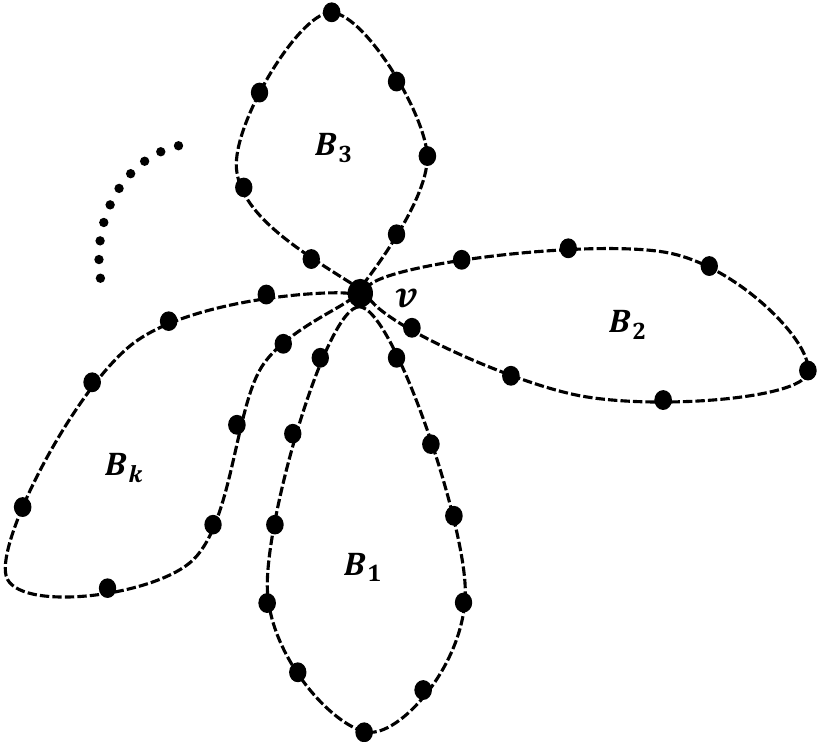}
		\caption{ Graph $ G $} \label{fig3}
	\end{center}
\end{figure}

\begin{lemma}\label{Lm3}
	If $ G $ is a cactus graph with all blocks being cycles and exactly one cut vertex $ v $ (see Figure \ref{fig3}). Then \begin{enumerate}
		\item[(i)]$ \tau(G-v)=\tau(G)-1 $.
		\item[(ii)] $ \mathcal{E}(G)> 2\tau(G) $.
	\end{enumerate}
\end{lemma}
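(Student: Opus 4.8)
\textbf{Proof proposal for Lemma \ref{Lm3}.}

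The plan is to treat the two parts separately, using part (i) to drive the induction-style argument in part (ii). For part (i), let $C_1, C_2, \dots, C_k$ be the cycles (blocks) of $G$, all sharing the single cut vertex $v$. I would first note that deleting $v$ disconnects $G$ into the $k$ paths $P_i := C_i - v$, so $\tau(G-v) = \sum_{i=1}^k \tau(P_i)$. On the other hand, a minimum vertex cover of $G$ can always be chosen to contain $v$: any edge incident to $v$ must be covered, and once $v$ is in the cover, what remains to cover in each block $C_i$ is exactly the edge set of the path $P_i = C_i - v$. Hence $\tau(G) = 1 + \sum_{i=1}^k \tau(P_i) = 1 + \tau(G-v)$, which is (i). The only thing to be slightly careful about is justifying that some minimum vertex cover contains $v$; this follows because if a minimum cover $X$ omits $v$, it must contain both neighbors of $v$ in each block, so swapping one of them for $v$ (in some block) does not increase $|X|$ and still covers all edges — or more cleanly, $\tau(G) \le 1 + \sum \tau(P_i)$ by taking $\{v\} \cup \bigcup (\text{min cover of } P_i)$, and $\tau(G) \ge \tau(G-v) + \dots$ needs the observation that covering the edges at $v$ costs at least what a cover of $G-v$ already does not account for; I would phrase it via the cover $\{v\}\cup\bigcup_i(\text{min VC of }P_i)$ for the upper bound and a direct argument for the lower bound.

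For part (ii), I would apply Theorem \ref{Th2} (or Theorem \ref{Th1}) with the cut set $E$ consisting of all edges incident to $v$ — wait, that is not a star in general since it has $2k$ edges forming $k$ paths through $v$; instead I would peel off the blocks one at a time. Pick one block, say $C_k$, with $v\sim a$ and $v \sim b$ the two neighbors of $v$ in $C_k$. Let $E = \{e_{v,a}, e_{v,b}\}$ be the cut set separating the induced subgraph on $V(C_k)$ from the induced subgraph on the rest. These two edges share the vertex $v$, so they form a star $K_{1,2}$; hence by Theorem \ref{Th2}, $\mathcal{E}(G) > \mathcal{E}(G - E)$. Now $G-E = C_k' \oplus G'$ where $C_k'$ is the path $C_k$ with its $v$-edges removed together with $v$ isolated — actually $G-E$ keeps $v$ with its edges to the other blocks intact, so $G - E = (C_k - v \text{ as a path attached to nothing}) \oplus (\text{the cactus } G'' \text{ on the remaining } k-1 \text{ blocks through } v)$. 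Hmm — I need to be careful: removing only $e_{v,a}$ and $e_{v,b}$ leaves the path $a \dots b$ (which is $C_k - v$) as one component and $G - (C_k - v)$, which is the smaller cactus with $k-1$ cycle-blocks through $v$, as the other. So $\mathcal{E}(G) > \mathcal{E}(C_k - v) + \mathcal{E}(G'')$. Since $C_k - v$ is a path (a tree), Theorem \ref{Th3.1} gives $\mathcal{E}(C_k - v) \ge 2\tau(C_k - v)$, and by induction on the number of blocks $\mathcal{E}(G'') \ge 2\tau(G'')$ (the base case $k=1$ being a single cycle, handled by Proposition \ref{prop3.1}(3)); combined with the additivity of $\tau$ over these components and part (i), this yields $\mathcal{E}(G) > 2\tau(C_k - v) + 2\tau(G'') = 2\tau(G)$.

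The main obstacle I anticipate is bookkeeping the vertex cover numbers correctly across the decomposition: I must verify that $\tau(C_k - v) + \tau(G'') = \tau(G)$, which is essentially part (i) applied twice (or once, noting $\tau(G) - 1 = \tau(G - v) = \sum_i \tau(C_i - v)$ and $\tau(G'') - 1 = \sum_{i<k}\tau(C_i - v)$, so $\tau(C_k - v) + \tau(G'') = \tau(C_k-v) + 1 + \sum_{i<k}\tau(C_i - v) = 1 + \sum_i \tau(C_i-v) = \tau(G)$). A secondary subtlety is the strictness of the inequality: Theorem \ref{Th2} already gives strict inequality at the very first peeling step, so even if the inductive estimate $\mathcal{E}(G'')\ge 2\tau(G'')$ is only a weak inequality, the final bound $\mathcal{E}(G) > 2\tau(G)$ is strict, and I should make sure to invoke Theorem \ref{Th2} rather than Theorem \ref{Th1} at that step. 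One should also double-check the degenerate case $k=1$ separately (single cycle, $G - v$ a path), where $\tau(C_n) = \lceil n/2 \rceil$ and Proposition \ref{prop3.1}(3) gives the non-strict bound, but then Theorem \ref{Th2} applied once still upgrades it to strict.
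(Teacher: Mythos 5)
Your argument is correct, and it reaches the same conclusion through a slightly different decomposition than the paper. The paper makes a single cut: it takes $E$ to be all edges joining $v$ to the blocks $B_2,\dots,B_k$ (a star centered at $v$), so that $G-E=B_1\oplus S$ with $S=\bigoplus_{i\ge 2}(B_i-v)$ a disjoint union of paths and $B_1$ a full cycle still containing $v$; it then applies Theorem \ref{Th2} once, Proposition \ref{prop3.1}(3) to the cycle $B_1$, and Theorem \ref{Th3.1} to the paths, with the bookkeeping $\tau(B_1)+\sum_{i\ge 2}\tau(B_i-v)=1+\sum_i\tau(B_i-v)=\tau(G)$. You instead peel off one block per step, cutting the two edges $\{e_{v,a},e_{v,b}\}$ (also a star) to detach the path $C_k-v$ from the smaller cactus containing $v$, and induct on the number of blocks. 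Both routes rest on exactly the same three ingredients (Theorem \ref{Th2} with a star cut at $v$, the cycle bound, the tree/path bound) and the same $\tau$-additivity; the paper's version avoids the induction and the need to discuss the degenerate single-cycle base case, while yours localizes each application of Theorem \ref{Th2} to just two edges. Your handling of strictness (one strict step from Theorem \ref{Th2} suffices, so the inductive estimate may be weak) and of the base case where $G''$ is a single cycle is sound. One minor remark on part (i): your first "swap" argument is not quite airtight as stated (replacing a neighbor $a_1$ of $v$ by $v$ can uncover the other edge at $a_1$), but the cleaner route you then propose — the explicit cover $\{v\}\cup\bigcup_i(\text{min cover of }P_i)$ for the upper bound plus a direct counting argument when a cover omits $v$ — does work, and is in fact more careful than the paper's own one-line justification of (i).
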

\begin{proof}
	(i) Since the cut vertex $ v $ is of the maximum degree and other vertices have degree $ 2 $, so $ v $ must belong to any minimum vertex cover of $ G $. Therefore, $ \tau(G-v)=\tau(G)-1 $.\\
	(ii) Let $ B_1, B_2, \dots, B_k $ be $ k $ blocks in $ G $, each of which is a cycle, and they have a common vertex $ v $. Since $ G-v=\bigoplus\limits_{i=1}^{k}(B_i-v) $, so $ \tau(G)=1+\sum\limits_{i=1}^{k}\tau(B_i- v) $. Let us consider an induced subgraph $ B_1 $ and its complimentary induced subgraph, say $ S $ in $ G $, where $ S=\bigoplus\limits_{i=2}^{k}(B_i- v) $. Let $ E $ be the cut set such that $ G-E=B_1\oplus S $. Then by Theorem \ref{Th2}, Proposition \ref{prop3.1}(3) and Lemma \ref{Th3.1}, we have $ \mathcal{E}(G)>\mathcal{E}(G-E)\geq 2\{ \tau(B_1)+\sum\limits_{i=2}^{k}\tau(B_i- v)\}=2\tau(G) $.
\end{proof}

A graph $ G $ is called a \textit{cycle-clique} graph if each block of $ G $ is either a cycle or a clique. Some examples of cycle-clique graphs are cactus graphs, friendship graphs, block graphs, graphs with vertex disjoint cycles, trees, etc.

\begin{theorem}\label{Th3.2}
	If $ G $ is a cycle-clique graph. Then $ \mathcal{E}(G)\geq 2\tau(G) $.
\end{theorem}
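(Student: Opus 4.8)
The plan is to induct on the number of blocks of $G$, using the block-cut-tree structure together with the cut-set energy inequalities (Theorems \ref{Th1} and \ref{Th2}) to peel off one pendant block at a time. If $G$ has a single block, then $G$ is either a clique or a cycle, and the result follows from Proposition \ref{prop3.1}(1) and (3). So suppose $G$ has at least two blocks. Pick a pendant block $B$ of $G$, i.e. a block containing exactly one cut vertex $v$ of $G$; such a block always exists by the block-cut-tree structure. Let $H = G - (V(B)\setminus\{v\})$ be the rest of the graph, so that $G$ is obtained by gluing $B$ to $H$ at $v$. The first key step is the vertex-cover bookkeeping: I claim $\tau(G) \le \tau(B) + \tau(H-v) + 1$ or, more usefully, $\tau(G) = \tau(B-v) + \tau(H)$ when $v$ lies in a minimum vertex cover, and in general we can always choose minimum covers so that $\tau(G) \le \tau(B) + \tau(H-v)$ — this mirrors the counting done in Lemma \ref{Lm3}(ii) and in Theorem \ref{Th3.1}. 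The clean way to say it: there is a minimum vertex cover of $G$ whose restriction covers $B$ and whose restriction covers $H$, and the only possible double-counting is at $v$, so $\tau(G) \ge \tau(B) + \tau(H) - 1 \ge (\tau(B-v)+1) + \tau(H) - 1$ in the relevant cases; I will need to set this up carefully so the induction closes.

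The second key step is the energy estimate. Take $E$ to be the cut set of edges of $G$ incident to $v$ that lie in $B$ — more precisely, arrange things (as in Lemma \ref{Lm3}) so that $G - E = B \oplus (H)$ or $G-E = (B-v) \oplus H \oplus (\text{the edge } e_{v,w})$ for a neighbour $w$ of $v$ in $B$, choosing $w$ so that $E$ forms a star centred at $w$. When $E$ is a star we get the strict inequality $\mathcal{E}(G) > \mathcal{E}(G-E)$ from Theorem \ref{Th2}; otherwise we still get $\mathcal{E}(G) \ge \mathcal{E}(G-E)$ from Theorem \ref{Th1}. Then $\mathcal{E}(G-E)$ decomposes as a sum over the pieces, each of which is a cycle-clique graph with fewer blocks (or a clique, or a cycle, or an edge), so the induction hypothesis plus Proposition \ref{prop3.1} and Theorem \ref{Th3.1} gives $\mathcal{E}(G-E) \ge 2\tau(B-v) + 2\tau(H) + 2$ (or the analogous sum), which by the vertex-cover bookkeeping is $\ge 2\tau(G)$.

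The main obstacle I anticipate is the vertex-cover arithmetic at the gluing vertex $v$: unlike the cactus case in Lemma \ref{Lm3}, here $v$ need not have maximum degree, so $v$ is not forced into every minimum vertex cover, and the naive bound $\tau(G) = \tau(B-v) + \tau(H)$ can fail — one must instead argue that we may *choose* a decomposition (choosing which of $B$ or $H$ "pays" for covering $v$) that is compatible with both the vertex-cover count and the choice of cut set $E$ as a star. Concretely, if some minimum vertex cover of $B$ contains $v$, cut at $v$ on the $B$ side and use $\tau(G) \le \tau(B) + \tau(H-v) \le \tau(B) + \tau(H)$; if not, then $\tau(B-v) = \tau(B)$ and one cuts slightly inside $B$. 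Handling both cases uniformly, and making sure the star condition for the strict inequality is available in at least enough cases to also pin down the equality statement (which the theorem as stated does not even claim — so in fact only the non-strict inequality is needed), is the delicate part; the energy side is essentially routine once the cut set is chosen correctly.
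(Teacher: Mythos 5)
Your overall strategy --- peel off a pendant block $B$ at its unique cut vertex $v$ with a star cut set, apply Theorem \ref{Th2}, and close an induction --- is the right one and is essentially the paper's mechanism (the paper inducts on $\tau$ and organizes the blocks into an auxiliary tree, but the engine is the same). However, the proposal as written has a genuine gap: the cut set you primarily propose and the vertex-cover bookkeeping do not match, and with that cut the bound can fail. If $E$ is ``the edges of $G$ incident to $v$ that lie in $B$'', then $G-E=(B-v)\oplus H$, so Theorem \ref{Th2} gives $\mathcal{E}(G)>\mathcal{E}(B-v)+\mathcal{E}(H)\ge 2\tau(B-v)+2\tau(H)$, and to conclude you need $\tau(G)\le\tau(B-v)+\tau(H)$. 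That requires $H$ to admit a minimum vertex cover containing $v$, which can fail for a general cycle-clique graph $H$. Concretely, let $H$ be the $4$-cycle $v\,p\,q\,r$ with a triangle $p\,s\,t$ attached at $p$: its only minimum vertex covers are $\{p,r,s\}$ and $\{p,r,t\}$, neither containing $v$, and $\tau(H)=3$. Glue a triangle $B=\{v,a,b\}$ at $v$ to form $G$ (a cycle-clique graph with no pendant vertices). Then $\tau(G)=5$ while $\tau(B-v)+\tau(H)=1+3=4$, so your estimate yields only $\mathcal{E}(G)>2+\mathcal{E}(H)\ge 8<10=2\tau(G)$. The correct move is the complementary cut: delete the star of edges from $v$ into $H$, so that $G-E=B\oplus(H-v)$, and pair it with $\tau(G)\le\tau(B)+\tau(H-v)$. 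That inequality always holds here because \emph{every vertex of a cycle or a clique lies in some minimum vertex cover of that block}, so a minimum cover of $B$ through $v$ together with a minimum cover of $H-v$ covers all of $G$; this is the missing lemma, and it is precisely where the hypothesis that every block is a cycle or a clique enters. You do write down $\tau(G)\le\tau(B)+\tau(H-v)$, but you attach it to the wrong cut and treat its availability as an uncertain case distinction rather than establishing that it always holds.

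Two smaller but real problems. First, your induction parameter (number of blocks) need not decrease: $H-v$ can have \emph{more} blocks than $G$, since deleting $v$ from a long cycle block of $H$ through $v$ shatters that block into many edge blocks; induct on $|V(G)|$ or on $\tau(G)$ (as the paper does) instead. Second, the fallback ``otherwise we still get $\mathcal{E}(G)\ge\mathcal{E}(G-E)$ from Theorem \ref{Th1}'' is unjustified: Theorem \ref{Th1} requires a \emph{simple} cut set (independent edges), not an arbitrary one, and energy can increase under deletion of a general cut set. With the corrected cut this is moot, since the deleted edges always form a star centred at $v$ and Theorem \ref{Th2} applies directly.
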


\begin{proof}
	We prove the result by induction on $ \tau(G) $. If $ \tau(G)=1 $, then $ G \cong S_n $ and $ \mathcal{E}(G)=2\sqrt{n}\geq 2\tau(G) $. If $ \tau(G)=2 $ and $ G $ is a tree then by Theorem \ref{Th3.1}, $ \mathcal{E}(G)\geq 2\tau(G) $. Suppose $ G $ is a cycle-clique graph other than tree with $ \tau(G)=2 $, then either $ G\cong G_{p,q} $ or $ G\cong D_{p,q} $ (see Figure \ref{fig2}). Then by Lemma \ref{Lm1}, $ \mathcal{E}(G)\geq 2\tau(G) $. Let us assume that for any cycle-clique graph $ H $ with $ \tau(H)<\tau(G) $, $ \mathcal{E}(H)\geq 2\tau(H) $. Let $ G $ be any cycle-clique graph with vertex cover number $ \tau(G)>2 $. \\
	\noindent \textbf{Case 1:} If $ G $ has a pendent vertex.\\
	Suppose $ v $ is a pendent vertex and $ u $ is its quasi-pendent vertex. Then we can always find a minimum vertex cover $ U $ of $ G $ such that $ u\in U $.  Minimality of $ |U| $ implies that $ \tau(G- u)=\tau(G)-1 $. Let us take a cut set $ E_1=\{e_{u,w}:w\in N(u)\setminus v \} $. Then by Theorem \ref{Th2} and induction hypothesis, $$ \mathcal{E}(G)>\mathcal{E}(G-E_1)=\mathcal{E}([e_{u,v}])+\mathcal{E}(G-[e_{u,v}])=2+\mathcal{E}(G-u)\geq 2\tau(G).$$\\
	\noindent \textbf{Case 2:} If $ G $ has no pendent vertices. \\
	Let $ B_1, B_2, \dots, B_k $ be the only blocks of $ G $. Then each of $ B_i's $ is either an edge, a cycle, or a complete graph. Construct a tree $ T=(V(T), E(T)) $ obtain by deleting some edges of a graph $ G_1=(V(G_1), E(G_1)) $, where $ V(T)=V(G_1)=\{ B_1, B_2, \dots, B_k\} $. We consider $ B_i\sim B_j $ in $ G_1 $ if and only if $ B_i $ and $ B_j $ has a common vertex. Let $ B_r $ be a block that is either a cycle or a clique containing maximum cut vertices. Take $ B_r $ as a root vertex of $ T $ and put it in level $ 1 $. Then put all vertices of $ N_{G_1}(B_r) $ in level $ 2 $. If the vertices of $ N_{G_1}(B_r) $ are connected by some edges, remove them. Next, for each $ B_i\in N_{G_1}(B_r) $, take its remaining neighbour vertices in level $ 3 $ and remove all edges that arise in level $ 3 $. By the same steps, finally, we get a tree $ T $ (see Figure). Let $ B_{i_1}, B_{i_2}, \dots,  B_{i_p}  $ be the vertices of $ T $ in the top level, say $ i $ and $ B_{i-1} $ be their quasi-pendent vertex.\\
	\noindent \textbf{Case 2.1:} Suppose $ B_{i-1} $ is a cycle or a clique.\\
	It is clear that none of $ B_{i_1}, B_{i_2}, \dots,  B_{i_p} $ are edges in $ G $. Let $ \left(\bigcup\limits_{j=1}^{p} V(B_{i_j})\right) \bigcap V(B_{i-1})=\{u_1, u_2, \dots, u_t\} $. 
	Let us assume that $ u_1 $ is a common vertex of $ B_{i_1}, B_{i_2}, \dots, B_{i_s} $ and $ B_{i-1} $, $ 1\leq s<t $.  Then there is a minimum vertex cover $ U $ of $ G $ such that $ u_1 \in U $. Therefore, $ \tau(G- u_1) =\tau(G)-1$. Let $ G- u_1=S_1\oplus S_2 $, where $ S_1=\bigoplus\limits_{j=1}^{s}(B_{i_j}- u_1) $ and $ S_2 $ is the remaining component. Let $ H=\bigoplus\limits_{j=1}^{s}B_{i_j}$ be an induced subgraph of $ G $. Then $ S_2$ is the complimentary induced subgraph of $ H $ in $ G $. Now $ \tau(H)=\tau(S_1)+1 $, so $ \tau(G)=\tau(H)+\tau(S_2) $. Let $ E $ be a cut set such that $ G-E=H\oplus S_2 $. Therefore, by Theorem \ref{Th2}, $$ \mathcal{E}(G)>\mathcal{E}(G-E)=\mathcal{E}(H)+\mathcal{E}(S_2).$$ Here $ H $ is either a cycle, a complete graph, or graphs of the form given in Figure \ref{fig3}. Then by Proposition \ref{prop3.1}(3) or Lemma \ref{Lm3} and induction hypothesis, $ \mathcal{E}(G)>2\tau(H)+2\tau(S_2)=2\tau(G)$.\\
	\noindent \textbf{Case 2.2:} If $ B_{i-1} $ is an edge. \\
	Let $ B_{i-1}=e_{u,v} $.\\
	\noindent \textbf{Case 2.2.1:} If $ p=1 $. \\
	That is, $ B_{i_1} $ is the only vertex in the top level in $ T $. Then $ B_{i_1}$ is either $C_n $ or $K_n $ and $ u $ is the cut vertex in $B_{i_1} $. It is clear that $ e_{u,v} $ is a cut edge. Then there is a minimum vertex cover $ U $ such that $ u\in U $. Now $ \tau(G- u)=\tau(G)-1$. Also $ G- u=S_1 \oplus S_2 $, where $ S_1=B_{i_1}- u $, $ \tau(S_1)=\tau(B_{i_1})-1 $. Then $ \tau(G)=\tau(S_2)+\tau(B_{i_1}) $. Let $ E=\{ e_{u,v}\} $ be a cut set. Then by Theorem \ref{Th2}, Proposition \ref{prop3.1}(3) and induction hypothesis, $$ \mathcal{E}(G)>\mathcal{E}(G-E)=\mathcal{E}(B_{i_1})+\mathcal{E}(S_2)\geq 2\tau(B_{i_1})+2\tau(S_2)=2\tau(G) $$.\\
	\noindent \textbf{Case 2.2.2:} If $ p\geq 2 $.\\
	Suppose $ B_{i_1}, B_{i_2}, \dots, B_{i_p} $ are in top level in $ T $. Then all $ B_{i_1}, B_{i_2}, \dots, B_{i_p} $ have a common cut vertex say $ u $ with $ B_{i-1} $. Therefore, there exists a minimum vertex cover $ U $ of $ G $ such that $ u\in U $. Then similar to Case $ 2.1.1 $, we can obtain $ \mathcal{E}(G)> 2\tau(G) $.	
\end{proof}

\begin{theorem}\label{th3.3}
	If $ G $ is a connected cycle-clique graph. Then $ \mathcal{E}(G)=2\tau(G) $ if and only if $ G\cong C_4 $ or $ C_3 $ or $ K_n $ or an edge or isolated vertex.
\end{theorem}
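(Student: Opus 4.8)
\emph{Plan.} The plan is to prove the two implications separately, using Theorem~\ref{Th3.2} ($\mathcal{E}(G)\ge 2\tau(G)$ for cycle-clique graphs) as the ambient bound and determining exactly when it is tight. For the ``if'' direction I would simply compute: the spectrum of $K_n$ is $\{n-1\}\cup\{-1,\dots,-1\}$, so $\mathcal{E}(K_n)=2(n-1)=2\tau(K_n)$, and specialising $n=1,2,3$ already covers the isolated vertex, the edge and $C_3=K_3$; for $C_4$ the spectrum is $\{2,0,0,-2\}$ (equivalently Lemma~\ref{lm2.1} gives $\mathcal{E}(C_4)=4\cot(\pi/4)=4$) while $\tau(C_4)=2$, so $\mathcal{E}(C_4)=2\tau(C_4)$.

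\emph{Necessity, set-up.} Suppose $G$ is a connected cycle-clique graph with $\mathcal{E}(G)=2\tau(G)$. If $G\cong K_1$ or $K_2$ we are done, so assume $|V(G)|\ge 3$; then $G$ either has a pendant vertex or has minimum degree at least $2$. In the first case, let $v$ be a pendant vertex with quasi-pendant $u$. Since $G\not\cong K_2$ the edge $uv$ is a bridge, $u$ is a cut vertex with $\deg u\ge 2$, and $E_1:=\{e_{uw}:w\in N(u)\setminus\{v\}\}$ is a non-empty star with centre $u$ separating $\{u,v\}$ from the rest of $G$. Deleting a cut vertex from a cycle-clique graph leaves a cycle-clique graph, so $G-u$ is cycle-clique, and, exactly as in the proof of Theorem~\ref{Th3.2}, $\tau(G-u)=\tau(G)-1$. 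Then Theorem~\ref{Th2} and Theorem~\ref{Th3.2} give
\[
\mathcal{E}(G)>\mathcal{E}(G-E_1)=2+\mathcal{E}(G-u)\ge 2+2\tau(G-u)=2\tau(G),
\]
contradicting $\mathcal{E}(G)=2\tau(G)$; hence this case cannot occur.

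\emph{Necessity, minimum degree at least $2$.} If $G$ is $2$-connected it consists of a single block, so $G\cong C_n$ or $G\cong K_n$ with $n\ge 3$. Every clique gives equality. For a cycle one checks, from Lemma~\ref{lm2.1} together with $\tau(C_n)=\lceil n/2\rceil$ (using $\tan x<\tfrac{4}{\pi}x$ on $(0,\pi/4)$ when $n\equiv 0\pmod 4$ and $\sin x<x$ otherwise), that $\mathcal{E}(C_n)=2\tau(C_n)$ precisely for $n\in\{3,4\}$ and $\mathcal{E}(C_n)>2\tau(C_n)$ for $n\ge 5$; so equality forces $G\cong C_3$ or $C_4$. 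If instead $G$ has a cut vertex, I would run the reduction of Case~2 of the proof of Theorem~\ref{Th3.2}: passing to the block tree, one selects a top-level block, or the family of top-level blocks meeting their parent block in a common cut vertex $u$, lets $H$ be the induced subgraph they span and $S_2$ its complement, and takes the cut set $E$ with $G-E=H\oplus S_2$. There $H$ and $S_2$ are again cycle-clique and $\tau(G)=\tau(H)+\tau(S_2)$; the extra point is that in each branch of that argument every edge of $E$ emanates from the single vertex $u$, so $E$ is a star and Theorem~\ref{Th2} applies, giving
\[
\mathcal{E}(G)>\mathcal{E}(G-E)=\mathcal{E}(H)+\mathcal{E}(S_2)\ge 2\tau(H)+2\tau(S_2)=2\tau(G),
\]
again a contradiction. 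Therefore a connected cycle-clique graph with $\mathcal{E}(G)=2\tau(G)$ is $K_1$, $K_2$, $K_n$, $C_3$ or $C_4$, which is the claimed list.

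\emph{Main obstacle.} The work is bookkeeping rather than a new idea. One must check that each graph produced by a reduction ($G-u$, $H$, $S_2$) is genuinely cycle-clique so that Theorem~\ref{Th3.2} is applicable to it, that $\tau$ splits additively across the cut so the comparison is really against $2\tau(G)$, and, most importantly for strictness, that the cut set removed at every step is literally a star, so that Theorem~\ref{Th2} upgrades ``$\ge$'' to ``$>$''. Singling out $C_3$ and $C_4$ as the only tight cycles is routine but must be carried out with the closed-form expressions of Lemma~\ref{lm2.1}.
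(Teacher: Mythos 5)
Your proposal is correct and follows essentially the same route as the paper: use the strict inequalities established in the case analysis of Theorem~\ref{Th3.2} to rule out any graph with a pendant vertex or more than one block, reduce to a single block (edge, cycle, or clique), and then verify directly which of these attain equality. Your verification that among cycles only $C_3$ and $C_4$ are tight is in fact slightly more complete than the paper's, which only asserts strictness for $n>5$ and leaves $C_5$ implicit.
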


\begin{proof}
	Let $ G $ be a cycle-clique graph with $  \mathcal{E}(G)=2\tau(G)$. Suppose $ G $ is not an isolated vertex or an edge. Then we can observe that $ G $ has no pendent vertices, otherwise by Case $ 1 $ in Theorem \ref{Th3.2}, $ \mathcal{E}(G)>2\tau(G) $. Using the way given in Case 2 of Theorem \ref{Th3.2}, we can construct a tree $ T $ from $ G $. It is clear that all pendent vertices in the top-level of $ T $ are cycles or cliques in $ G $. If the quasi-pendent vertex of all top-level vertices of $ T $ is either a cycle or a clique in G, then by Case $ 2.1 $ of Theorem \ref{Th3.2}, $ \mathcal{E}(G)>2\tau(G) $. If the quasi-pendent vertex is an edge in $ G $, then by Case 2.2, $ \mathcal{E}(G)>2\tau(G) $. Therefore, $ T $ has no quasi-pendent vertex. Thus $ T $ has only an isolated vertex. Thus $ G $ has a single block. Therefore, $ G $ is either an edge, a cycle, or a complete graph. If $ G\cong C_n $, for $ n>5 $, then $ \mathcal{E}(C_n)>2\tau(C_n) $. Also for $ G \cong C_3$, $ G\cong C_4 $ or $ G $ is an edge or an isolated vertices, $ \mathcal{E}(G)=2\tau(G) $. If $ G\cong K_n $, then $ \mathcal{E}(G)=2\tau(G) $. 
\end{proof}

\begin{corollary}
	If $ G $ is a block graph, then $ \mathcal{E}(G)\geq 2\tau(G) $. Equality occurs if and only if $ G\cong K_n $, for some $ n\geq 2 $ or isolated vertex.
\end{corollary}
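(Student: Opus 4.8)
The plan is to derive this as a direct corollary of Theorem~\ref{th3.3}. First I would recall that a \emph{block graph} is precisely a graph in which every block is a clique; in particular every block graph is a cycle-clique graph (the cycles allowed being, trivially, absent or of length $3$, which are themselves cliques). Hence Theorem~\ref{Th3.2} immediately gives $\mathcal{E}(G)\geq 2\tau(G)$ for every block graph $G$, after first reducing to the connected case: since both energy and vertex cover number are additive over connected components, it suffices to prove the inequality and characterize equality componentwise.

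For the equality characterization I would invoke Theorem~\ref{th3.3}: a connected cycle-clique graph $G$ satisfies $\mathcal{E}(G)=2\tau(G)$ iff $G\cong C_4$, $C_3$, $K_n$, an edge, or an isolated vertex. Now intersect this list with the class of block graphs. A block graph cannot contain $C_4$ as a block (its blocks are cliques, and $C_4=K_{2,2}$ is not a clique), so $C_4$ is excluded; $C_3=K_3$, an edge $=K_2$, and $K_n$ are all block graphs, so among the finite list the possibilities are exactly $K_n$ for $n\geq 2$ together with the isolated vertex $K_1$. That gives the stated equality condition for a connected block graph; for a general (possibly disconnected) block graph, equality $\mathcal{E}(G)=2\tau(G)$ forces equality on each component, hence every component is some $K_n$ ($n\ge 2$) or an isolated vertex, and conversely such a disjoint union attains equality. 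If one wishes to state the corollary for connected $G$ only, the answer collapses cleanly to ``$G\cong K_n$ for some $n\ge 2$ or $G$ is an isolated vertex,'' matching the phrasing in the statement.

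The only point requiring a word of care — and really the only ``obstacle,'' such as it is — is making sure the definition of block graph used here (every block a complete graph) is the one intended, and explicitly ruling out $C_4$ from the equality list on the grounds that $K_{2,2}$ is not a block graph; everything else is a bookkeeping reduction to the already-proved Theorem~\ref{th3.3}. I would write the proof in three short sentences: block graphs are cycle-clique graphs so the inequality follows from Theorem~\ref{Th3.2}; reduce equality to the connected case by additivity; then read off the answer from Theorem~\ref{th3.3} after discarding $C_3$ as $K_3$ (absorbed into the $K_n$ family) and $C_4$ as not a block graph.
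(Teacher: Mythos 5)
Your proposal is correct and follows exactly the route the paper intends (the paper leaves the corollary unproved as an immediate consequence of Theorems \ref{Th3.2} and \ref{th3.3}): block graphs are cycle-clique graphs, so the inequality follows, and the equality case is read off from Theorem \ref{th3.3} after noting that $C_4=K_{2,2}$ is not a block graph while $C_3$ and an edge are $K_3$ and $K_2$. Your explicit reduction to connected components and the remark excluding $C_4$ are exactly the small checks one should make.
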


\begin{corollary}
	Let $ G $ be a cactus graph. Then $ \mathcal{E}(G)\geq 2\tau(G) $. Equality occurs if and only if $ G\equiv C_4 $ or $ G\equiv C_3 $ or edge or isolated vertex.
\end{corollary}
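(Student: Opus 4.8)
The plan is to obtain this corollary directly from Theorem \ref{Th3.2} and Theorem \ref{th3.3}, since a cactus graph is a particular cycle-clique graph. First I would recall that in a cactus graph every block is either an edge or a cycle; as an edge is the complete graph $K_2$, every block is a cycle or a clique, so $G$ is a cycle-clique graph. If $G$ is connected, Theorem \ref{Th3.2} immediately yields $\mathcal{E}(G)\geq 2\tau(G)$. If $G$ is disconnected, I would write $G=G_1\oplus\cdots\oplus G_r$ with each $G_i$ a connected cactus and use additivity of both the energy and the vertex cover number over components to get $\mathcal{E}(G)=\sum_i\mathcal{E}(G_i)\geq\sum_i 2\tau(G_i)=2\tau(G)$.

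For the equality characterization, the ``if'' direction is a direct check: $\mathcal{E}(C_3)=\mathcal{E}(C_4)=4=2\tau$, an edge has $\mathcal{E}=2=2\tau$, and an isolated vertex has $\mathcal{E}=0=2\tau$. For the ``only if'' direction, suppose $\mathcal{E}(G)=2\tau(G)$. Reducing to the connected case (consistent with the surrounding corollaries on block graphs and connected cycle-clique graphs, I would treat a cactus as connected; otherwise each component would individually attain equality and the argument applies componentwise), Theorem \ref{th3.3} forces $G\cong C_4$, $C_3$, $K_n$, an edge, or an isolated vertex. It then remains to discard $K_n$ for $n\geq 4$: such a graph is not a cactus, since any two triangles in $K_n$ share an edge rather than a single vertex. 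Hence the only options compatible with $G$ being a cactus are $C_4$, $C_3$, an edge, or an isolated vertex, as claimed.

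I do not expect a substantive obstacle here: the corollary is essentially a restriction of results already proved. The only points needing care are confirming that ``cactus $\Rightarrow$ cycle-clique'' is precisely what licenses the appeal to Theorems \ref{Th3.2} and \ref{th3.3}, and intersecting the equality list of Theorem \ref{th3.3} with the class of cactus graphs (i.e.\ deleting $K_n$ for $n\geq 4$, while keeping $K_3=C_3$ and $K_2$, the edge). A minor bookkeeping issue is the connectivity convention for ``cactus graph'', which I would fix as above.
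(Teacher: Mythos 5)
Your proposal is correct and follows exactly the route the paper intends: the corollary is stated without proof as an immediate consequence of Theorem \ref{Th3.2} (every cactus is a cycle-clique graph) and the equality characterization in Theorem \ref{th3.3}, with the cliques $K_n$, $n\geq 4$, excluded because they are not cacti. Your handling of the disconnected case and the explicit verification of the equality instances are sound additions that the paper leaves implicit.
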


\begin{corollary}\label{cor3.3}
			If $ G $ is a connected graph with vertex disjoint cycles. Then $ \mathcal{E}(G)\geq 2\tau(G) $. Equality occurs if and only if $ G\equiv C_4 $ or $ G\equiv C_3 $ or edge or isolated vertex.
\end{corollary}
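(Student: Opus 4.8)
The plan is to reduce this corollary to the cycle-clique results already proved, namely Theorem~\ref{Th3.2} and Theorem~\ref{th3.3}. The decisive structural observation is that a connected graph $G$ whose cycles are pairwise vertex-disjoint is in fact a cycle-clique graph. To see this, look at the block-cut-vertex decomposition of $G$ and fix a block $B$. If $|V(B)|\le 2$ then $B$ is $K_1$ or $K_2$, and $K_2$ is simultaneously an edge and the clique $K_2$. If $|V(B)|\ge 3$ then $B$ is $2$-connected, so by the standard ear decomposition a $2$-connected graph on at least three vertices is either a single cycle or contains a cycle together with an ear (or chord) attached at two of its vertices; in the latter case $B$ contains two distinct cycles sharing at least those two vertices, and since $E(B)\subseteq E(G)$ these two cycles lie in $G$, contradicting the hypothesis that the cycles of $G$ are vertex-disjoint. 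Hence every block of $G$ is an edge or a cycle, so in particular $G$ is a cycle-clique graph.

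Once this is in hand, the inequality is immediate: applying Theorem~\ref{Th3.2} to the cycle-clique graph $G$ gives $\mathcal{E}(G)\ge 2\tau(G)$.

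For the equality case I would invoke Theorem~\ref{th3.3}: since $G$ is a connected cycle-clique graph with $\mathcal{E}(G)=2\tau(G)$, it must be one of $C_4$, $C_3$, $K_n$, an edge, or an isolated vertex. It then remains to intersect this list with the class of graphs having vertex-disjoint cycles. Here one observes that $K_n$ with $n\ge 4$ contains, for instance, the two triangles on $\{1,2,3\}$ and $\{1,2,4\}$, which share the vertices $1$ and $2$; thus $K_n$ with $n\ge 4$ does not have vertex-disjoint cycles and must be discarded. The remaining cases $K_1$ (an isolated vertex), $K_2$ (an edge), and $K_3=C_3$ are already on the final list, so equality holds exactly for $G\cong C_4$, $C_3$, an edge, or an isolated vertex. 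Conversely, each of these four graphs clearly has vertex-disjoint cycles and satisfies $\mathcal{E}(G)=2\tau(G)$ (directly, or via Proposition~\ref{prop3.1} and Lemma~\ref{lm2.1}), which completes the characterization.

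I expect the only genuine obstacle to be the structural claim that each block is an edge or a cycle; everything after that is a direct appeal to Theorems~\ref{Th3.2} and~\ref{th3.3} plus the elementary remark about $K_n$ for $n\ge 4$. The block claim itself is routine graph theory, so the proof should be short, but it is worth stating carefully since it is what links the ``vertex disjoint cycles'' hypothesis to the cycle-clique framework.
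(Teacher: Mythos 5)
Your proposal is correct and follows exactly the route the paper intends: the paper gives no separate proof of this corollary, having already listed graphs with vertex-disjoint cycles among the examples of cycle-clique graphs, so the statement is read off from Theorem~\ref{Th3.2} and Theorem~\ref{th3.3} after discarding $K_n$ with $n\geq 4$ from the equality list. Your block-decomposition argument simply makes explicit the structural fact the paper takes for granted, and your handling of the equality case is accurate.
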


A graph is called a \textit{split graph} if its vertices can be partitioned into two parts say $ V_1 $  and $ V_2 $ such that one part induces a clique and the other forms an independent set. Any vertices in one part can be adjacent with any vertices in the other part.
	
Since removing isolated vertices from a split graph does not effect the energy and the vertex cover number of the reducing split graph, so it is enough to consider split graphs to be connected. 

\begin{lemma}\label{Lm4}
	Let $ G $ be a split graph such that $ \omega(G)=\tau(G) $. Then there is a set of vertex disjoint induced complete subgraphs $ G_1, G_2, \dots, G_s $ of $ G $ such that $ \tau(G)=\sum\limits_{i=1}^{s}\tau(G_i) $. 
\end{lemma}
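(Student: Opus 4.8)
The plan is to reduce the lemma to an elementary decomposition of the clique side of a split partition, using the hypothesis $\omega(G)=\tau(G)$ only to control the adjacency between the two parts.

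First I fix a split partition $V(G)=C\cup I$ with $C$ inducing a clique and $I$ independent. The first step is to extract the structural consequence of $\omega(G)=\tau(G)$. Since $C$ is a clique we always have $\omega(G)\ge |C|$, hence $\tau(G)\ge|C|$. Now suppose some $c_0\in C$ has no neighbour in $I$. Then $C\setminus\{c_0\}$ is already a vertex cover of $G$: every edge of $G$ lies inside $C$ or joins $C$ to $I$; in the first case at most one of its endpoints is $c_0$, in the second case its $C$-endpoint is not $c_0$ (as $c_0$ has no neighbour in $I$). This forces $\tau(G)\le|C|-1<|C|\le\tau(G)$, a contradiction. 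Therefore every vertex of $C$ has at least one neighbour in $I$; consequently $C$ itself is a vertex cover, so $\tau(G)\le|C|$, and combined with $\tau(G)\ge|C|$ we get $\tau(G)=|C|$.

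Second, I build the subgraphs. Using the previous step, choose for each $c\in C$ a neighbour $f(c)\in N(c)\cap I$. Let $x_1,\dots,x_s$ be the distinct elements of the image $f(C)\subseteq I$, put $C_j=f^{-1}(x_j)$, and let $G_j$ be the subgraph of $G$ induced by $C_j\cup\{x_j\}$. Each $G_j$ is complete, since $C_j\subseteq C$ induces a clique and $x_j$ is adjacent to every vertex of $C_j=f^{-1}(x_j)$ by construction. The $G_j$ are pairwise vertex-disjoint, because the sets $C_j$ partition $C$ and the $x_j$ are distinct vertices of $I$, which is disjoint from $C$. Finally, a complete graph on $m$ vertices has vertex cover number $m-1$, so $\tau(G_j)=|C_j|$ and $\sum_{j=1}^{s}\tau(G_j)=\sum_{j=1}^{s}|C_j|=|C|=\tau(G)$, which is exactly the claimed identity.

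There is essentially no serious obstacle beyond two observations that are easy to overlook: that $\omega(G)=\tau(G)$ forces every vertex of the clique part to have a neighbour in the independent part (this is the only place the hypothesis enters), and that the neighbour-assignment $f$ need not be injective — grouping $C$ by the fibres of an arbitrary such $f$ already yields the desired cliques, so no Hall-type matching argument is required. (If $G$ is edgeless the statement holds vacuously with $s=0$; otherwise, as noted before the lemma, we may assume $G$ connected, so $C\neq\emptyset$ and $s\geq 1$.)
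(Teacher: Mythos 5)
Your proof is correct and follows essentially the same route as the paper: both arguments partition the clique side $C$ into groups, each assigned to a single vertex of the independent side adjacent to all of it, yielding vertex-disjoint induced complete subgraphs whose cover numbers sum to $|C|=\tau(G)$. Your write-up has the minor advantage of explicitly verifying (from $\omega(G)=\tau(G)$) that every vertex of $C$ has a neighbour in $I$, a fact the paper's proof uses implicitly when it assumes the neighbourhoods of some independent vertices cover all of $V(K_p)$.
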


\begin{proof}
	Let $ K_p $ be the maximal complete subgraph of $ G $. Then $ V(G)\setminus V(K_p) $ is a vertex independent set of $ G $. Since $ \tau(G)=\omega(G)=p $, so $ V(K_p) $ is a minimum vertex cover of $ G $. Let $\{u_1, u_2, \dots, u_s \}$ be a minimal subset of $ V(G)\setminus V(K_p) $ such that $ \bigcup\limits_{i=1}^{s} N_{G}(u_i)=V(K_p) $. For each $ i=1, \dots, s $, let $ H_i $ be a subgraph induced by the vertices $ N_{G}[u_i] $. Then $ H_i $ is complete. Let $ G_1=H_1 $, and $ G_i=H_i- \bigcup\limits_{j=1}^{i-1}V(H_j) $, $ i=2,\dots, s $. Each $ G_i $ is a complete subgraph of $ G $ and is a vertex disjoint. Then $ \sum\limits_{i=1}^{s}\tau(G_i)= \sum\limits_{i=1}^{s}(|V(G_i)|-1)=|V(K_p)|=\tau(G)$.  
\end{proof}

\begin{theorem}\label{Th3.4}
	If $ G $ is a split graph, then $ \mathcal{E}(G)\geq 2\tau(G) $.
\end{theorem}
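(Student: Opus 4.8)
The plan is to reduce a general connected split graph $G$ to the situation already handled by the earlier results, and then invoke them. Recall that if $\omega(G)$ is the clique number and $\alpha(G)$ the independence number of a split graph, then $\tau(G)=n-\alpha(G)$, and one always has $\tau(G)\le\omega(G)$ because a maximum clique (after possibly moving one vertex) gives a vertex cover; moreover $\tau(G)\ge\omega(G)-1$ is automatic but in fact for split graphs the clean dichotomy is $\tau(G)=\omega(G)$ or $\tau(G)=\omega(G)-1$. So the first step is to split into these two cases according to whether $\tau(G)=\omega(G)$ or $\tau(G)=\omega(G)-1$.

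In the case $\tau(G)=\omega(G)$, Lemma \ref{Lm4} gives vertex-disjoint induced complete subgraphs $G_1,\dots,G_s$ of $G$ with $\tau(G)=\sum_{i=1}^s\tau(G_i)$. I would let $E$ be the set of all edges of $G$ not lying inside any $G_i$; then $G-E=G_1\oplus G_2\oplus\cdots\oplus G_s$ together with any leftover isolated vertices. The point is that $E$ is a cut set (its removal disconnects the $G_i$'s from each other and from the rest), so Theorem \ref{Th1} gives $\mathcal E(G)\ge\mathcal E(G-E)=\sum_{i=1}^s\mathcal E(G_i)$. Since each $G_i$ is complete, Proposition \ref{prop3.1}(1) gives $\mathcal E(G_i)=2\tau(G_i)$, hence $\mathcal E(G)\ge\sum_i 2\tau(G_i)=2\tau(G)$. (One should check that $E$ really is a cut set and not empty in degenerate cases; if $E=\emptyset$ then $G$ itself is a disjoint union of cliques and the bound is immediate.)

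In the remaining case $\tau(G)=\omega(G)-1$, write $V(G)=V_1\cup V_2$ with $V_1$ inducing a maximum clique $K_p$ (so $p=\omega(G)$) and $V_2$ independent. Here $\tau(G)=p-1$, which forces a strong structural restriction: some vertex $v\in V_1$ can be removed from the cover, i.e. the set $V_1\setminus\{v\}$ is already a vertex cover, which means every vertex of $V_2$ has all its neighbours in $V_1\setminus\{v\}$ is false in general — rather it means $v$ has no neighbour in $V_2$, i.e. $v$ is adjacent only to the other $p-1$ clique vertices. So the idea is: delete the edge set $E'$ joining $v$ to $V_1\setminus\{v\}$ and show it is a cut set with $v$ becoming isolated; then $\mathcal E(G)\ge\mathcal E(G-E')$, where $G-E'$ is a split graph on $V_1\setminus\{v\}$ and $V_2$ whose clique number is $p-1=\tau(G)$, reducing to the first case. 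Alternatively, and perhaps more cleanly, one argues directly: the graph $G-v$ is a split graph with $\tau(G-v)=\tau(G)=p-1=\omega(G-v)$ (the clique $V_1\setminus\{v\}$ remains maximum), and since $v$ is a simple vertex whose incident edges form a star/cut set, Theorem \ref{Th1} (or Theorem \ref{Th2}) gives $\mathcal E(G)\ge\mathcal E(G-v)\ge 2\tau(G-v)=2\tau(G)$ by the first case applied to $G-v$.

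The main obstacle I anticipate is the case analysis in the $\tau(G)=\omega(G)-1$ branch: pinning down exactly what "$\tau=\omega-1$" forces about the adjacency between $V_1$ and $V_2$, and making sure the vertex or edge set being deleted is genuinely a cut set so that Theorem \ref{Th1}/\ref{Th2} applies. It is conceivable the author instead handles everything at once by a smarter choice of induced complete subgraphs even when $\tau=\omega-1$ (allowing one $K_1$), so that Lemma \ref{Lm4}'s argument extends; I would first try to see whether the partition construction in Lemma \ref{Lm4} can be pushed through in that case too, which would collapse both cases into a single cut-set argument and avoid the delicate structural discussion entirely.
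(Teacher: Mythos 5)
Your first case ($\tau(G)=\omega(G)$) is essentially the paper's argument: invoke Lemma \ref{Lm4}, strip away all edges outside the disjoint induced cliques $G_1,\dots,G_s$ via Theorem \ref{Th1}, and use $\mathcal{E}(G_i)=2\tau(G_i)$. That part is fine (modulo the same looseness as the paper about applying Theorem \ref{Th1} to a multi-component separation in one step).

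The gap is in your second case, $\tau(G)=\omega(G)-1=p-1$. Your structural observation is correct: there must be a clique vertex $v$ with no neighbour in the independent set $V_2$. But both of your proposed reductions rest on the claim that $\tau(G-v)=\tau(G)$ (equivalently $\tau(G-E')=\tau(G)$ after isolating $v$), and that claim is false in general. Take $G=K_p$ itself (here $\tau=p-1=\omega-1$): then $G-v=K_{p-1}$ has $\tau(G-v)=p-2$, and your chain only yields $\mathcal{E}(G)\ge 2(p-2)$, which is short of $2\tau(G)=2(p-1)$. A non-degenerate example is $K_3$ with a pendant edge attached at one clique vertex $w$: $\tau(G)=2$, but removing a clique vertex $v\ne w$ with no $V_2$-neighbour leaves $P_3$ with $\tau=1$ and energy $2\sqrt{2}<4$. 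The issue is that after deleting $v$ the clique $V_1\setminus\{v\}$ may again contain a vertex with no $V_2$-neighbour, so the vertex cover number keeps dropping and the induction does not close. The paper avoids this entirely: when $\tau(G)=p-1$ it deletes \emph{all} edges between $V(K_p)$ and $V_2$ (a cut set between complementary induced subgraphs), leaving $K_p$ plus isolated vertices, and then $\mathcal{E}(G)\ge\mathcal{E}(K_p)=2(p-1)=2\tau(G)$ by Proposition \ref{prop3.1}(1) — no preservation of $\tau$ under the reduction is needed, because $K_p$ already realizes the required energy exactly. You should replace your second case by this direct computation.
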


\begin{proof}
	Let $ G $ be a split graph with vertex set $ V(G)=\{ v_1, v_2, \dots, v_n\} $. Let $ K_p $ be the maximal complete subgraph of $ G $. So $ \omega(G)=p$. Then $ \tau(G)=p $ or $ p-1 $. Suppose $ \tau(G)=p-1 $. W.l.o.g, consider $ V(K_p)=\{v_1,v_2, \dots,v_p\}$. Then $\{ v_{p+1}, \dots, v_{n}\} $ is a vertex independent set. Let $ E $ be a cut set containing the edges between $ V(K_p) $ and the above vertex-independent set. Then By Theorem \ref{Th1}, $ \mathcal{E}(G)\geq \mathcal{E}(G-E)=\mathcal{E}(K_p)=2\tau(G) $. Suppose $ \tau(G)=p $, Then by Lemma \ref{Lm4}, there is a set of vertex disjoint induced complete subgraphs $ G_1, G_2, \dots, G_s $ such that $ \tau(G)=\sum\limits_{i=1}^{s}\tau(G_i) $. Since $ G_i's $ are vertex disjoint induced complete subgraphs of $ G $, so by Theorem and $ \mathcal{E}(G)\geq \sum\limits_{i=1}^{s}\mathcal{E}(G_i) =2\sum\limits_{i=1}^{s}\tau(G_i)=2\tau(G)$. 
\end{proof}

Some particular type of split graphs are \textit{threshold graphs}, \textit{nested split graph}, \textit{complete split graph etc.} % take reference from the Reference with titled " threshold.." and the book inequalities for graph..
\begin{corollary}
	If $ G $ is a threshold graph, then $ \mathcal{E}(G)\geq 2\tau(G) $.
\end{corollary}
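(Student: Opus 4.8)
The plan is to observe that every threshold graph is a split graph and then quote Theorem \ref{Th3.4}. Recall that a threshold graph $G$ is obtained from a single vertex by repeatedly adding either an isolated vertex (adjacent to nothing added so far) or a dominating vertex (adjacent to everything added so far). So the first step is to record this construction and use it to exhibit a split partition: let $V_1$ be the set of vertices that were added as dominating vertices (together with the initial vertex, if convenient) and let $V_2$ be the set of vertices added as isolated vertices.

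Next I would verify that $V_1$ induces a clique and $V_2$ induces an independent set. For $V_1$: if $x$ and $y$ are both dominating-type vertices, say $x$ was added before $y$, then at the moment $y$ is added it is made adjacent to all earlier vertices, in particular to $x$, so $x\sim y$; hence $V_1$ is a clique. For $V_2$: if $x$ and $y$ are both isolated-type vertices with $x$ added before $y$, then $y$ is added adjacent to nothing present, so $y\not\sim x$; moreover no later addition can create an edge between two already-present vertices, so $V_2$ is independent. This shows $G$ is a split graph with the split partition $(V_1,V_2)$.

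Finally, since $G$ is a split graph, Theorem \ref{Th3.4} gives immediately $\mathcal{E}(G)\geq 2\tau(G)$, which is the claim. There is essentially no obstacle here; the only mild subtlety is to present the "no later addition creates an edge between two present vertices" observation cleanly so that the independence of $V_2$ is airtight, but this is routine. If one prefers to avoid the constructive characterization entirely, an alternative is to cite the standard structural fact that threshold graphs form a subclass of split graphs (indeed they are exactly the split graphs with a nested neighbourhood structure), already alluded to in the sentence preceding the corollary, and then invoke Theorem \ref{Th3.4}.
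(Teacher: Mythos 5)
Your proposal is correct and matches the paper's intended argument exactly: the paper simply notes that threshold graphs are a particular type of split graph and invokes Theorem \ref{Th3.4}. Your explicit verification of the split partition from the constructive characterization is a harmless (and slightly more careful) elaboration of the same route.
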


%\noindent\textbf{Wheel graphs and $ W_{m,n} $}\\
A graph obtained by joining a vertex $ u $ to every vertex of a cycle $ C_n $ is known as \emph{wheel graph} and is denoted by $ W_n $. The vertex $ u $ is called the center of $ W_n $. Let $ u_1,u_2, \cdots, u_m $ be $ m $ vertices. A graph obtained by joining each vertex $ u_i $ to every vertex of $ C_n $, for $ i=1,2,\cdots,m $ is denoted by $ W_{m,n} $. Note that $ W_{1,n}=W_n $.

Let $ G_1 $ and $ G_2 $ be two graphs. Then the \emph{join} of $ G_1 $ and $ G_2 $ is another graph induced by joining each vertex of $ G_1 $ to every vertex of $ G_2 $ and is denoted by $ G_1 \vee G_2 $.

\begin{lemma}\label{lm3.5}
If $ G=W_{m,n}$, for some $ m,n\in \mathbb{N} $. Then $ \mathcal{E}(W_{m,n})=\mathcal{E}(C_n)+2\sqrt{mn+1}-2 $.	
\end{lemma}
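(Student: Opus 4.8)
The plan is to compute the spectrum of $W_{m,n}$ directly, using the fact that $W_{m,n}$ is the join $\overline{K_m}\vee C_n$, i.e., the join of an empty graph on $m$ vertices with a cycle on $n$ vertices. The adjacency matrix then has the block form
\[
A(W_{m,n})=\begin{pmatrix} 0_{m\times m} & J_{m\times n}\\ J_{n\times m} & A(C_n)\end{pmatrix},
\]
where $J$ is the all-ones matrix. I would exploit the fact that the $m$ vertices in the $\overline{K_m}$ part are pairwise nonadjacent with identical neighbourhoods (all of $C_n$): any vector supported on those $m$ coordinates that sums to zero is an eigenvector with eigenvalue $0$, contributing $m-1$ eigenvalues equal to $0$. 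These contribute nothing to the energy.

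Next, the remaining part of the spectrum lives on the orthogonal complement, which is spanned by the all-ones vector $\mathbf{1}_m$ on the first block together with the full $C_n$-block; equivalently, one works with the $(n+1)\times(n+1)$ quotient-type matrix obtained by collapsing the $m$ twin vertices. The eigenvectors of $A(C_n)$ split into the constant vector $\mathbf{1}_n$ (eigenvalue $2$) and $n-1$ vectors orthogonal to $\mathbf{1}_n$. Each eigenvector $w$ of $A(C_n)$ with $A(C_n)w=\mu w$ and $\mathbf{1}_n^\top w=0$ extends to an eigenvector of $A(W_{m,n})$ (with zeros on the first block) with the same eigenvalue $\mu$; these are exactly the eigenvalues of $C_n$ other than $2$, and they contribute $\mathcal{E}(C_n)-2$ to the energy. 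Finally, on the two-dimensional space spanned by $\mathbf{1}_m\oplus 0$ and $0\oplus\mathbf{1}_n$, the matrix acts as $\begin{pmatrix} 0 & n\\ m & 2\end{pmatrix}$ (since $J_{m\times n}\mathbf{1}_n=n\mathbf{1}_m$ and $J_{n\times m}\mathbf{1}_m=m\mathbf{1}_n$, and $A(C_n)\mathbf{1}_n=2\mathbf{1}_n$), whose eigenvalues are $1\pm\sqrt{1+mn}$. Their absolute values sum to $2\sqrt{mn+1}$ because $\sqrt{mn+1}>1$, so the positive root contributes $1+\sqrt{mn+1}$ and the negative root contributes $\sqrt{mn+1}-1$.

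Adding the three contributions gives $\mathcal{E}(W_{m,n})=0+(\mathcal{E}(C_n)-2)+2\sqrt{mn+1}=\mathcal{E}(C_n)+2\sqrt{mn+1}-2$, as claimed. The main thing to be careful about is the bookkeeping: one must verify that the proposed eigenvectors are linearly independent and exhaust all $m+n$ dimensions — the count is $(m-1)+(n-1)+2=m+n$, which checks out — and that $2$ is genuinely an eigenvalue of $A(C_n)$ with the constant eigenvector (true for every cycle), so that removing it from $\mathcal{E}(C_n)$ and replacing it by the contribution from the $2\times 2$ block is legitimate even when $2$ has higher multiplicity issues (it does not: $2$ is a simple eigenvalue of a connected $C_n$). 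No genuine obstacle is expected; the argument is a routine equitable-partition / twin-reduction computation, and the only minor subtlety is handling the absolute values in the $2\times 2$ block correctly.
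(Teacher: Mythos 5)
Your proof is correct and follows essentially the same route as the paper: the paper simply cites the known formula for the spectrum of the join of a regular graph with $\overline{K_m}$, namely $\spec(W_{m,n})=\{1\pm\sqrt{mn+1}\}\cup(\spec(C_n)\setminus\{2\})\cup\{0^{(m-1)}\}$, and reads off the energy. The only difference is that you prove that spectral fact from scratch via the twin-vertex/equitable-partition decomposition, which makes the argument self-contained; your bookkeeping of the three invariant subspaces, the simplicity of the eigenvalue $2$ of $C_n$, and the absolute values $|1\pm\sqrt{mn+1}|$ is all correct.
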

\begin{proof}
	Let $G_1$ be a graph of $ m $ isolated vertices. Then $C_n \vee G_1=W_{m,n}$. Then $\spec(W_{m,n})=\{ 1-\sqrt{mn+1},1+\sqrt{mn+1}\} \cup \spec(C_n)\setminus \{2\}$ ( See known result). Thus, $\mathcal{E}(W_{m,n})=\mathcal{E}(C_n)+2\sqrt{mn+1}-2.$
\end{proof}

\begin{theorem}
	If $ G=W_{m,n} $, for some $ m,n\in \mathbb{N} $. Then $ \mathcal{E}(G)\geq 2\tau(G) $ and equality occur if and only if $ G\simeq W_{1,3} $ or $ W_{1,4} $.
\end{theorem}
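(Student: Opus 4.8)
The plan is to compute $\tau(W_{m,n})$ explicitly, then use Lemma~\ref{lm3.5} together with the cycle energy formula in Lemma~\ref{lm2.1} to reduce the inequality $\mathcal{E}(W_{m,n})\ge 2\tau(W_{m,n})$ to an elementary estimate in $m$ and $n$. First I would determine the vertex cover number: the $m$ centre vertices $u_1,\dots,u_m$ together with a minimum vertex cover of $C_n$ form a vertex cover of $W_{m,n}$, since every edge is either an $u_i$–cycle edge (covered by $u_i$) or a cycle edge (covered by the cover of $C_n$); conversely one checks no smaller cover works when $n\ge 3$, so $\tau(W_{m,n})=m+\tau(C_n)=m+\lceil n/2\rceil$. (The small case $n\le 2$ can be folded in or treated separately, but for a wheel one normally takes $n\ge 3$.)

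Next I would substitute into Lemma~\ref{lm3.5}: the claim becomes
\[
\mathcal{E}(C_n)+2\sqrt{mn+1}-2 \;\ge\; 2m+2\Big\lceil \tfrac{n}{2}\Big\rceil .
\]
By Proposition~\ref{prop3.1}(3) we already know $\mathcal{E}(C_n)\ge 2\tau(C_n)=2\lceil n/2\rceil$, so it suffices to show $2\sqrt{mn+1}-2\ge 2m$, i.e.\ $\sqrt{mn+1}\ge m+1$, i.e.\ $mn+1\ge m^2+2m+1$, i.e.\ $n\ge m+2$. So the easy regime is $n\ge m+2$, and there the inequality is strict unless both bounds are tight.

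The main obstacle is the complementary range $n\le m+1$, where the crude split $\mathcal{E}(C_n)\ge 2\lceil n/2\rceil$ plus $2\sqrt{mn+1}-2\ge 2m$ is no longer enough, because the term $2\sqrt{mn+1}$ must now absorb the full deficit $2m+2\lceil n/2\rceil-\mathcal{E}(C_n)$. Here I would keep more: use the sharper lower bound $\mathcal{E}(C_n)>4n/\pi$ for odd $n$ (as in the proof of Proposition~\ref{prop3.1}(3)) and $\mathcal{E}(C_n)=4/\sin(\pi/n)$ or $4\cos(\pi/n)/\sin(\pi/n)$ for even $n$, each of which exceeds $2\lceil n/2\rceil$ by a quantity growing like a constant times $n$; meanwhile $n\le m+1$ forces $\sqrt{mn+1}\ge\sqrt{n(n-1)+1}\ge n-\tfrac12$, so $2\sqrt{mn+1}-2\ge 2n-3$. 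Combining, one needs $\mathcal{E}(C_n)+2n-3\ge 2m+2\lceil n/2\rceil$ with $m\le$ (something), which cannot hold for all $m$ — so in fact the honest statement must be that $n\le m+1$ cannot occur together with the bound failing only for finitely many $(m,n)$, forcing a direct finite check. Concretely I expect the genuinely delicate sub-case to be $n\in\{3,4\}$ with $m$ small: there $\mathcal{E}(C_3)=4$, $\mathcal{E}(C_4)=4$, and one verifies $4+2\sqrt{3m+1}-2\ge 2m+4$ and $4+2\sqrt{4m+1}-2\ge 2m+4$ directly, getting equality exactly at $(m,n)=(1,3)$ and $(1,4)$ and strict inequality otherwise.

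Finally, for the equality characterisation I would trace back through the two estimates: equality in $\mathcal{E}(W_{m,n})=2\tau(W_{m,n})$ forces equality in $\mathcal{E}(C_n)=2\tau(C_n)$ (so $n\in\{3,4\}$ by Theorem~\ref{th3.3} applied to cycles, or directly) and simultaneously equality in $\sqrt{mn+1}=m+1$ (so $n=m+2$), which together leave only $n=3,m=1$ and $n=4,m=2$; but $n=4,m=2$ gives $\mathcal{E}=\mathcal{E}(C_4)+2\sqrt{9}-2=4+4=8$ versus $2\tau=2(2+2)=8$ — so that case also achieves equality and must be added, unless a more careful look at $\mathcal{E}(C_4)$ versus the strict form of Proposition~\ref{prop3.1}(3) rules it out; I would recheck this arithmetic, since the stated theorem lists only $W_{1,3}$ and $W_{1,4}$, so the correct reading is presumably $n=m+2$ combined with $n\le 4$ giving $(m,n)\in\{(1,3),(2,4)\}$ and then eliminating $(2,4)$ — this bookkeeping is the one place I would be most careful.
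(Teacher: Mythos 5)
Your overall strategy is the same as the paper's: compute $\tau(W_{m,n})$, invoke Lemma~\ref{lm3.5}, and reduce everything to $\sqrt{mn+1}\ge m+1$ together with $\mathcal{E}(C_n)\ge 2\tau(C_n)$. The genuine gap is your vertex cover count. The formula $\tau(W_{m,n})=m+\lceil n/2\rceil$ is wrong once $m>\lfloor n/2\rfloor$: the $n$ cycle vertices alone form a cover (every edge has an endpoint on the cycle), and since a largest independent set of $W_{m,n}$ has size $\max\bigl(m,\lfloor n/2\rfloor\bigr)$, the correct value is $\tau(W_{m,n})=\min\bigl(n,\,m+\lceil n/2\rceil\bigr)$. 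This is exactly why your ``complementary range $n\le m+1$'' collapses into a muddle: the target inequality you set yourself there, e.g.\ $4+2\sqrt{3m+1}-2\ge 2m+4$ for $n=3$, is simply false for $m\ge 2$ (at $m=2$ the left side is $2+2\sqrt{7}\approx 7.29<8$), whereas the true claim $\mathcal{E}(W_{2,3})\ge 2\tau(W_{2,3})=6$ does hold. With the correct $\tau$ the case split is clean and needs no asymptotics on $\mathcal{E}(C_n)$ and no finite check: if $m\le\lfloor n/2\rfloor$ then $n\ge 2m\ge m+2$ (using $n\ge 3$ when $m=1$), so $\sqrt{mn+1}\ge m+1$ and your first computation closes the case; if $m>\lfloor n/2\rfloor$ then $\tau(G)=n$ and one only needs $\sqrt{mn+1}\ge\lfloor n/2\rfloor+1$, which follows from $m\ge\lfloor n/2\rfloor+1$ and $n\ge\lfloor n/2\rfloor+1$.

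On the equality analysis: your last paragraph is internally inconsistent with your third (you first assert equality ``exactly at $(1,3)$ and $(1,4)$'', then correctly derive the conditions $\mathcal{E}(C_n)=2\tau(C_n)$, i.e.\ $n\in\{3,4\}$, and $\sqrt{mn+1}=m+1$, i.e.\ $n=m+2$, which give $(m,n)=(1,3)$ and $(2,4)$), but the second computation is the right one, and you have in fact caught an error in the theorem as stated. Indeed $W_{2,4}$ (the octahedron $K_{2,2,2}$) satisfies $\mathcal{E}=4+2\sqrt{9}-2=8=2\tau$, while $W_{1,4}$ gives $\mathcal{E}=2+2\sqrt{5}>6=2\tau$ and is \emph{not} an equality case. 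The correct characterization is $G\simeq W_{1,3}$ or $W_{2,4}$; the paper's own proof derives precisely your two conditions and then misreads them in the conclusion. So: fix the $\tau$ formula, drop the hedging, and state the corrected equality case affirmatively.
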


\begin{proof}
	It is clear that $ \tau(G)=\tau(W_{m,n})=\tau(C_n)+m$, if $ 1\leq m\leq \lfloor \frac{n}{2}\rfloor $ and $ n $ otherwise.  Also, for any $ n\geq 3 $, $ \sqrt{mn+1}-1\geq m $. Therefore, by Lemma \ref{lm3.5}, $ \mathcal{E}(G)=\mathcal{E}(C_n)+ 2\sqrt{mn+1}-2\geq 2\tau(C_n)+2\sqrt{mn+1}-2\geq 2\tau(G)$. In fact, equality occur if and only if $ \mathcal{E}(C_n)=2\tau(C_n)$ and $ \sqrt{mn+1}-1=m $. That is, by Corollary \ref{cor3.3}, equality occur if and only if $ G\simeq W_{1,3} $ or $ W_{1,4} $ (See Figure \ref{fig4} ).
\end{proof}

\begin{figure}
	\begin{center}
		\includegraphics[scale= 0.35]{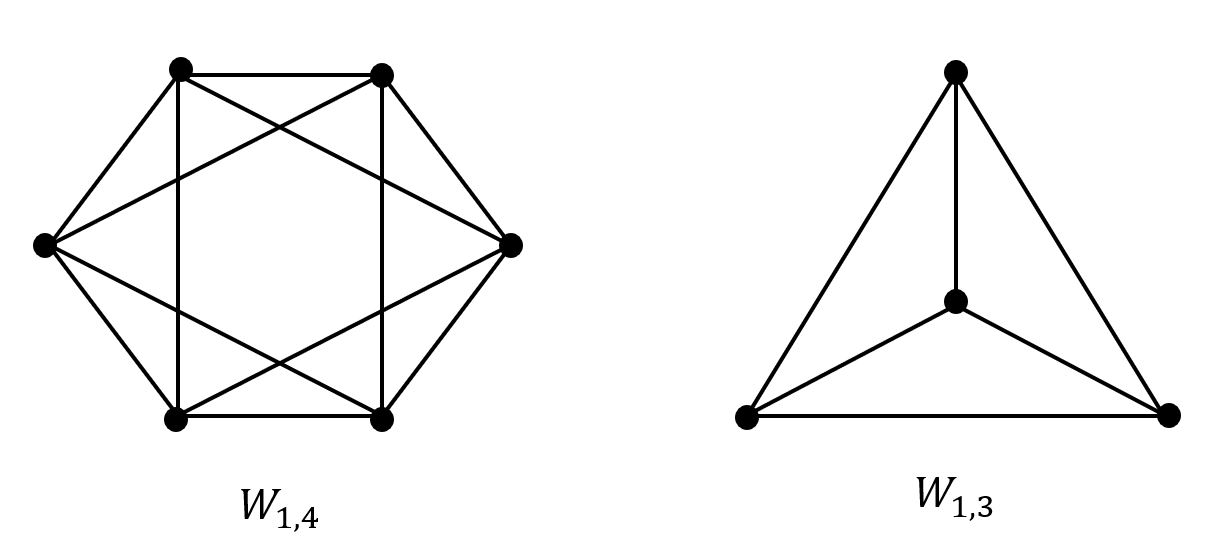}
		\caption{Graphs $W_{1,4}$ and $W_{1,3}$} \label{fig4}
	\end{center}
\end{figure}

\begin{definition}
Let $ G $ be a connected graph with vertex set $ V(G) $ and a minimum vertex cover $ \{ v_1, v_2, \dots, v_\tau\} $. Partition the vertex set $ V(G) $ into two sets $ X:=\{ v_1, \dots, v_\tau\} $ and $ Y:=V(G)\setminus X $. In VC-representation, the graph $ G $ is visualized through $X$ and $Y$, where the vertices of $X$ and $ Y$ form a minimum vertex cover and a vertex-independent set of $ G $, respectively, see Figure \ref{fig5}.
\end{definition}

\begin{figure}[htb!]
	\begin{center}
		\includegraphics[scale= 0.70]{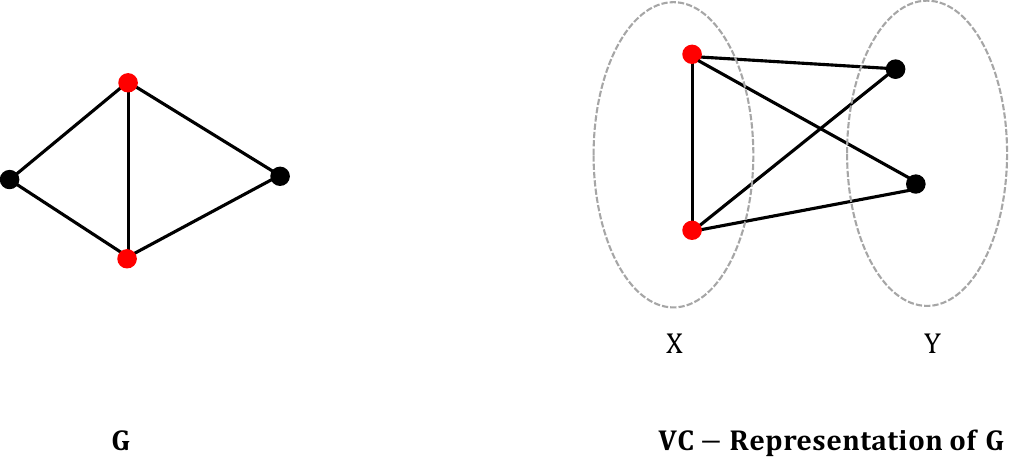}
		\caption{Graph $ G $ and its VC-Representation} \label{fig5}
	\end{center}
\end{figure}

 \begin{definition}
 	Let $ G $ be a connected graph with a minimum vertex cover $ X$. An associated split graph of $ G $, denoted by $ G_s $, is obtained by adding some edges in $ G $ such that the resulting subgraph induced by $ X $ forms a clique.
 \end{definition}
It is to be observed that $ G $ is a subgraph of $ G_s $. In VC-representation of $ G $, every vertex of $ X $ is connected with at least a vertex in $ Y $, and hence the same happens for $ G_s $. 
\begin{prop}\label{Prop3.1}
	Let $ G $ be a connected graph and $ G_s $ be an associated split graph of $ G $. Then $ \tau(G)=\tau(G_s) $.
%	\begin{itemize}
%		\item[(i)] $ \tau(G)=\tau(G_s)$.
%	\end{itemize}
\end{prop}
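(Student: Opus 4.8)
The plan is to establish the equality by proving the two inequalities $\tau(G)\le\tau(G_s)$ and $\tau(G_s)\le\tau(G)$ separately, using only the structural fact that $G_s$ is obtained from $G$ by adding edges \emph{entirely inside} the fixed minimum vertex cover $X=\{v_1,\dots,v_\tau\}$.

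First I would note that $G$ is a spanning subgraph of $G_s$, so $E(G)\subseteq E(G_s)$. Consequently any vertex cover of $G_s$ covers every edge of $G$ as well, i.e.\ it is also a vertex cover of $G$. Taking a minimum vertex cover of $G_s$ gives $\tau(G)\le\tau(G_s)$.

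For the reverse inequality, I would show that the very set $X$ that was used to build $G_s$ is itself a vertex cover of $G_s$. Indeed, $E(G_s)=E(G)\cup F$, where $F$ is the set of added edges; by the definition of an associated split graph, both endpoints of every edge in $F$ lie in $X$, so each edge of $F$ is covered by $X$. Every edge of $E(G)$ is covered by $X$ because $X$ is (in particular) a vertex cover of $G$. Hence $X$ is a vertex cover of $G_s$, giving $\tau(G_s)\le|X|=\tau(G)$.

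Combining the two inequalities yields $\tau(G)=\tau(G_s)$. I do not anticipate a genuine obstacle here; the only thing to be careful about is making explicit that the definition of $G_s$ forbids adding any edge with an endpoint in $Y$, which is exactly what keeps $X$ a valid cover of $G_s$.
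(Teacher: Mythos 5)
Your proof is correct; the paper actually states this proposition without any proof, and your two-inequality argument ($X$ covers $G_s$ since every added edge lies inside $X$, while any cover of the supergraph $G_s$ covers $G$) is exactly the standard justification that fills that gap. The only remark worth adding is that your argument is even slightly more robust than you claim: any edge of $G_s$ not in $G$ must have at least one endpoint in $X$ (since $Y$ must stay independent for $G_s$ to be split), so $X$ remains a cover of $G_s$ under either reading of the definition.
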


%It is expected that for any connected graph $ G $, $ \mathcal{E} (G)\leq \mathcal{E}(G_s)$.
\begin{theorem}
	Let $ G $ be a graph such that $ \mathcal{E} (G)\geq \mathcal{E}(G_s)$, for some associated split graph $ G_s $. Then $ \mathcal{E}(G)\geq 2\tau(G) $.
\end{theorem}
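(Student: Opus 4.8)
The plan is to obtain this as an immediate consequence of the split-graph bound together with the fact that passing to an associated split graph preserves the vertex cover number. Concretely, I would argue in three short steps. First, observe that $G_s$ is itself a split graph: by Definition its vertex set is partitioned as $V(G_s)=X\cup Y$ with $X$ a minimum vertex cover of $G$, the edges added to form $G_s$ lie inside $X$ (so that $X$ induces a clique in $G_s$), and since $X$ already covers every edge of $G$ the set $Y=V(G)\setminus X$ is independent in $G$ and remains independent in $G_s$; thus $X$ induces a clique and $Y$ an independent set in $G_s$, which is exactly the definition of a split graph (and $G_s$ inherits connectivity from $G$, or one simply discards isolated vertices as in the remark preceding Theorem \ref{Th3.4}).

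Second, apply Theorem \ref{Th3.4} to $G_s$ to get $\mathcal{E}(G_s)\geq 2\tau(G_s)$. Third, invoke Proposition \ref{Prop3.1}, which gives $\tau(G_s)=\tau(G)$. Chaining these with the hypothesis $\mathcal{E}(G)\geq\mathcal{E}(G_s)$ yields
\[
\mathcal{E}(G)\ \geq\ \mathcal{E}(G_s)\ \geq\ 2\tau(G_s)\ =\ 2\tau(G),
\]
which is the claim.

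There is essentially no technical obstacle internal to this statement — the work has already been done in Theorem \ref{Th3.4} and Proposition \ref{Prop3.1}, and the only thing to be careful about is the bookkeeping in Step 1, namely confirming that the construction of $G_s$ only introduces edges within $X$ so that $G_s$ genuinely lands in the class covered by Theorem \ref{Th3.4}. The real difficulty — which this theorem deliberately outsources to a hypothesis — is the monotonicity condition $\mathcal{E}(G)\geq\mathcal{E}(G_s)$: identifying natural families of graphs $G$ for which adding the chords inside $X$ does not decrease the energy is where the substantive effort lies, and I would expect that to be addressed by subsequent results rather than here.
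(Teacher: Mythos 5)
Your proposal is correct and is exactly the paper's argument: the paper's proof is the one-line chain $\mathcal{E}(G)\geq\mathcal{E}(G_s)\geq 2\tau(G_s)=2\tau(G)$ via Theorem \ref{Th3.4} and Proposition \ref{Prop3.1}. Your extra step verifying that $G_s$ is genuinely a split graph is a harmless (and slightly more careful) elaboration of what the paper leaves implicit.
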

\begin{proof}
	The proof follows from Theorem \ref{Th3.4} and Proposition \ref{Prop3.1}. 
\end{proof}

Let $ G $ and $ H $ be two graphs with vertex sets $ V(G) $ and $ V(H) $, respectively. The \textit{Cartesian product } of $ G $ and $ H $ is a graph, denoted by $ G \times H $ with vertex set $ V(G)\times V(H) $ such that $ (g_1,h_1)\sim (g_2, h_2) $ if and only if either (i) $ g_1=g_2 $ and $ h_1\sim h_2 $ or (ii) $ g_1 \sim g_2 $ and $ h_1=h_2 $, where $ (g_i, h_i) \in V(G)\times V(H)$, $ i=1,2 $. 
\begin{prop}\label{prop3.2}
	For any positive integer $ n $, $ \tau(K_n \times K_2)=2\tau(K_n) $.
\end{prop}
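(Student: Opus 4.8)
The goal is to show $\tau(K_n \times K_2) = 2\tau(K_n)$, i.e.\ that $\tau(K_n \times K_2) = 2(n-1)$. The plan is to argue both inequalities separately. For the upper bound, I would exhibit an explicit vertex cover of size $2(n-1)$: label the two copies of $K_n$ as $\{u_1,\dots,u_n\}$ and $\{w_1,\dots,w_n\}$, and take $X = \{u_1,\dots,u_{n-1}\} \cup \{w_1,\dots,w_{n-1}\}$. One then checks that every edge is covered: edges inside the first $K_n$ are covered unless both endpoints are $u_n$ (impossible); edges inside the second $K_n$ similarly; and the matching edges $u_iw_i$ are covered for $i \le n-1$, while $u_nw_n$ is the only potential problem — but wait, that edge is \emph{not} covered, so this particular $X$ fails. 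The fix is to instead drop the index $n$ from one copy only: take $X = \{u_1,\dots,u_{n-1}\} \cup \{w_1,\dots,w_{n-1}\}$ but then cover $u_nw_n$ by swapping, e.g.\ use $\{u_1,\dots,u_{n-1}, w_1,\dots,w_{n-2}, w_n\}$ — now $u_nw_n$ is covered by $w_n$, $w_{n-1}w_n$ is covered by $w_n$, and the uncovered-edge check inside the second copy needs both endpoints outside $X$, i.e.\ both in $\{w_{n-1}\}$, impossible. So $\tau(K_n\times K_2) \le 2n-2$.

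For the lower bound, the clean approach is via the independence number: $\tau(H) = |V(H)| - \alpha(H)$, so $\tau(K_n\times K_2) = 2n - \alpha(K_n\times K_2)$, and it suffices to show $\alpha(K_n\times K_2) = 2$. An independent set $S$ in $K_n\times K_2$ can contain at most one vertex from each copy of $K_n$ (since each copy induces a clique), so $|S| \le 2$; and $\{u_1, w_2\}$ (for $n\ge 2$) is independent since $u_1 \ne u_2$ rules out the matching edge and they lie in different copies. Hence $\alpha = 2$ and $\tau = 2n-2 = 2\tau(K_n)$. For $n=1$, $K_1\times K_2 = K_2$ has $\tau = 1 = 2\cdot 0$... here one should note $\tau(K_1)=0$ so $2\tau(K_1)=0\ne 1$; the statement is really meant for $n\ge 2$, or one checks the degenerate case matches the intended reading. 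I would handle $n \ge 2$ as the main case and dispatch $n=1$ with a remark if needed.

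I do not anticipate a genuine obstacle here — both bounds are elementary. The only thing to be careful about is getting a \emph{correct} explicit minimum vertex cover for the upper bound (the naive symmetric choice leaves one matching edge uncovered, as noted above), and this is entirely avoided by phrasing the upper bound through $\alpha(K_n\times K_2)=2$ as well: pick any maximum independent set and take its complement. So in the write-up I would use the single identity $\tau(H) = |V(H)| - \alpha(H)$ together with the two facts that every $K_n$-fiber is a clique and that two vertices in distinct fibers with distinct $K_n$-coordinates are non-adjacent, yielding $\alpha(K_n\times K_2) = 2$ and hence the result in one stroke.
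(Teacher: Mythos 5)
Your proof is correct (for $n\ge 2$) and takes a genuinely different, cleaner route than the paper. The paper argues the two inequalities separately: it declares $\tau(K_n\times K_2)\geq 2\tau(K_n)$ to be obvious (the underlying reason being that each $K_n$-fiber is a clique and so forces $n-1$ cover vertices), and for the upper bound it exhibits an explicit cover obtained by deleting two non-adjacent vertices, one from each fiber --- exactly the construction you arrive at after correcting your first (symmetric) attempt. Your final write-up instead runs everything through the Gallai identity $\tau(H)=|V(H)|-\alpha(H)$ and the single computation $\alpha(K_n\times K_2)=2$, which packages both inequalities into one stroke and sidesteps the bookkeeping of checking that a particular explicit cover really covers the matching edges (the pitfall you noticed, and which the paper's own labelling only narrowly avoids by choosing $w_n\sim w_{n+1}$ rather than $w_n\sim w_{2n}$). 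You also correctly flag that the statement fails for $n=1$, since $K_1\times K_2\cong K_2$ has $\tau=1$ while $2\tau(K_1)=0$; the paper states the proposition for all positive integers $n$ and its proof silently assumes the two deleted vertices are non-adjacent, which requires $n\ge 2$. That caveat is worth recording.
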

\begin{proof}
	It is obvious that $ \tau(K_n\times K_2)\geq 2 \tau(K_n) $. On the other hand, let us assume that $ V(K_n\times K_2)=\{ w_1, w_2, \dots, w_n, w_{n+1}, \dots, w_{2n}\} $. Consider a subset $ W=\{ w_1, w_2, \dots, w_n\} $ be such that $ W $ induces a complete graph $ K_n $ and $ w_n \sim w_{n+1} $. Let us take $ U=V(K_n \times K_2)\setminus\{ w_n, w_{2n}\} $. Then $ U $ forms a vertex cover of $ K_n \times K_2 $. Therefore $ \tau(K_n\times K_2)\leq 2n-2 $. Thus $ \tau(K_n \times K_2)=2\tau(K_n) $. 
\end{proof}

\begin{theorem}
	Let $ G=K_n\times K_2 $ be a graph, where $ n $ is a positive integer. Then $ \mathcal{E}(G)=2\tau(G) $.
\end{theorem}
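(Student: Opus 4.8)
The plan is to compute the adjacency spectrum of $G=K_n\times K_2$ in closed form, read off $\mathcal{E}(G)$, and compare it with $2\tau(G)=4(n-1)$, where the last equality is Proposition \ref{prop3.2}. Throughout I take $n\ge 2$; the degenerate case $n=1$ gives $G\cong K_2$ with $\mathcal{E}(G)=2=2\tau(G)$ directly.

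First I would use the standard description of the spectrum of a Cartesian product: since $A(K_n\times K_2)=A(K_n)\otimes I_2+I_n\otimes A(K_2)$ and the two summands commute, they are simultaneously diagonalizable, so $\spec(K_n\times K_2)=\{\lambda+\mu:\lambda\in\spec(K_n),\ \mu\in\spec(K_2)\}$ counted with multiplicity. Plugging in $\spec(K_n)=\{n-1\}\cup\{-1,\dots,-1\}$ (with $-1$ repeated $n-1$ times) and $\spec(K_2)=\{1,-1\}$, this produces the eigenvalue $n$ once, the eigenvalue $n-2$ once, the eigenvalue $0$ with multiplicity $n-1$, and the eigenvalue $-2$ with multiplicity $n-1$.

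Summing absolute values, and using $|n-2|=n-2$ since $n\ge 2$, I obtain
\[
\mathcal{E}(K_n\times K_2)=n+(n-2)+0+2(n-1)=4n-4,
\]
which equals $2\tau(G)$ by Proposition \ref{prop3.2}, finishing the proof. There is essentially no obstacle here; the only point meriting a word of care is the absolute value $|n-2|$, which I would dispose of by checking $n=2$ separately — there $K_2\times K_2\cong C_4$ and indeed $\mathcal{E}(C_4)=4=2\tau(C_4)$ — after which the displayed identity holds uniformly for all $n\ge 2$. If one prefers not to quote the product-spectrum formula, the eigenvectors can be exhibited directly: if $x$ is an eigenvector of $A(K_n)$ for eigenvalue $\lambda$, then $x\otimes(1,1)$ and $x\otimes(1,-1)$ are eigenvectors of $A(G)$ for $\lambda+1$ and $\lambda-1$ respectively, and such vectors span $\mathbb{R}^{2n}$, which recovers the same spectrum.
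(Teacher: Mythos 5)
Your proposal is correct and follows essentially the same route as the paper: the paper simply cites a known result (Lemma 3.26 of Bapat) for $\mathcal{E}(K_n\times K_2)=4(n-1)$ and combines it with Proposition \ref{prop3.2}, whereas you unpack that citation by computing the product spectrum $\{n,\ n-2,\ 0^{(n-1)},\ (-2)^{(n-1)}\}$ explicitly. The computation and the handling of $|n-2|$ for small $n$ are both sound, so nothing further is needed.
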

\begin{proof}
	By \cite[Lemma 3.26]{Bapat} and Proposition \ref{prop3.2}, we have $ \mathcal{E}(G)=4(n-1)=2\tau(G) $.
\end{proof}

%%%%%%%%%%%%%%%%%%%%%%%%%%%%%%%%%%%%%%%%%%%%%%%%
\section{Conclusion}
In this article, we establish the bound $ \mathcal{E}(G)\geq 2\tau $ for the following class of graphs. Cycles, Bipartite graphs, Complete graphs, cycle-clique graphs (some examples: cactus graphs, friendship graphs, block graphs, graphs with vertex disjoint cycles), Split graphs (some examples: threshold graphs, nested split graphs, complete split graphs), wheel graphs, $ W_{m,n} $ (defined earlier), some graphs obtained by cartesian product and join of graphs. Further we discuss equality of the bound for some class of graphs. 
%===========================================

 \section*{Acknowledgments}
	Aniruddha Samanta expresses thanks to the National Board for Higher Mathematics (NBHM), Department of Atomic Energy, India, for providing financial support in the form of an NBHM Post-doctoral Fellowship (Sanction Order No. 0204/21/2023/R\&D-II/10038). The author also acknowledges excellent working conditions in the Theoretical Statistics and Mathematics Unit, Indian Statistical Institute Kolkata. 

    %=======================
	%\bibliographystyle{amsplain}
%	\bibliography{Ref}

    \bibliographystyle{amsplain}
%\bibliography{bibtex}

\mbox{}

\end{document}